\documentclass[a4paper,10pt,article]{amsart}
\usepackage{amsmath, amscd,amssymb,amsthm,CJKnumb,verbatim,indentfirst,dsfont,mathrsfs,ipa,graphicx,textcomp,enumerate}
\usepackage[all]{xy}
\usepackage{CJK,ipa,amsmath,booktabs,longtable}
\usepackage[numbers]{natbib}
 \textwidth=125 mm
 \textheight=195 mm

\renewcommand{\parallel}{\hbox{/\kern -2pt/}}
\newtheorem{theorem}{Theorem}[section]
\newtheorem{lemma}[theorem]{Lemma}

\newtheorem{conjecture}[theorem]{Conjecture}

\theoremstyle{definition}
\newtheorem{definition}[theorem]{Definition}
\newtheorem{example}[theorem]{Example}

\theoremstyle{remark}
\newtheorem{remark}[theorem]{Remark}

\numberwithin{equation}{section}



\title{Uniformly bounded fibred coarse embeddability and uniformly bounded a-T-menability}
\begin{document}

\author{Jianguo Zhang}
\address{Research Center for Operator Algebras, East China Normal University}
\email{jgzhang@math.ecnu.edu.cn}

\author{Dapeng Zhou}
\address{\parbox{\linewidth}{School of Statistics and Information, Shanghai University of International Business and Economics.\\ Research Center for Operator Algebras, East China Normal University.}}

\email[Corresponding Author]{giantroczhou@126.com}

\maketitle

\begin{abstract}In this paper, we introduce the concept of uniformly bounded fibred coarse embeddability of metric spaces, generalizing the notion of fibred coarse embeddability defined by X. Chen, Q. Wang and G. Yu. Moreover, we show its relationship with uniformly bounded a-T-menability of groups. Finally, we give some examples to illustrate the differences between uniformly bounded fibred coarse embeddability and fibred coarse embeddability.
\end{abstract}
\pagestyle{plain}
\section{Introduction}
    For a second countable, locally compact group $G$, recall that $G$ is said to be \textit{a-T-menable} if it admits a continuous, isometric affine action $\alpha$ on a separable Hilbert space $H$ which is metrically proper, that is, for any bounded subset $B$ of $H$, the set $\{g\in G: \alpha_g(B)\cap B \neq \emptyset\}$ is relatively compact in $G$. The a-T-menability (also known as Haagerup property) introduced by M. Gromov is the strong opposition of Kazhdan's property (T) and includes amenable groups \cite{Lin}, finitely generated free groups, $SO(n,1)$ and $SU(n,1)$ (the isometry groups of $n$-dimensional real and complex hyperbolic spaces respectively), see \cite{HaagerupProperty}\cite{Gromovasymptotic}. The most spectacular result about the a-T-menability was obtained by N. Higson and G. Kasparov \cite{HigsonKasparov} and J.-L. Tu \cite{TuATmenable}: the Baum-Connes conjecture holds for all a-T-menable groups. \par
    For a metric space $(X,d)$, recall that $X$ is said to admit a \textit{coarse embedding into Hilbert space} if there exists a map $f:X\rightarrow H$ and two non-decreasing functions $\rho_{-}$ and $\rho_{+}$ from $[0,\infty)$ to $(-\infty,\infty)$ with $\lim_{t\rightarrow \infty}\rho_{\pm}(t)=\infty $ such that 
    $$\rho_{-}(d(x,y))\leq ||f(x)-f(y)||\leq \rho_{+}(d(x,y))$$
for any $x,y\in X$. The coarse embeddability was first defined by M. Gromov in \cite{Gromovasymptotic}, and it contains a large class of metric spaces, e.g. $n$-dimensional Euclidean spaces and hyperbolic spaces. A finitely generated group $\Gamma$ with a symmetric generating set $S$ can be seen as a metric space with word length metric defined by $d(\gamma_1,\gamma_2)=\min\{n:\gamma_1^{-1}\gamma_2=\sigma_1\cdots \sigma_n,\sigma_i\in S\}$ for $\gamma_1,\gamma_2\in \Gamma$. In that sense, any finitely generated a-T-menable group can be coarsely embedded into Hilbert space (see Lemma \ref{ActionImplyEmbed}). A remarkable result about coarse embeddability is that coarse Baum-Connes conjecture holds for any space with bounded geometry,  which admits a coarse embedding into Hilbert space, proved by G. Yu \cite{YuEmbedding} and G. Skandalis, J.-L. Tu and G. Yu \cite{SkandalisTuYu}. \par
    Is there a bridge from groups to metric spaces connecting the above two notions? It was first observed by J. Roe in \cite{Roe03} that a finitely generated, residually finite group $\Gamma$ is a-T-menable if its box space $\square\Gamma$ (see Definition \ref{DefBoxSpace}) admits a coarse embedding into Hilbert space. However, the converse of this statement is false, e.g. free group $\mathbb{F}_2$, which admits a box space consisting of a sequence of expanders, and any sequence of expanders cannot coarsely embed into Hilbert space \cite{NowakYu}\cite{WillettYuBook}. In \cite{ChenWangYufibredcoarse}, X. Chen, Q. Wang, and G. Yu introduced the concept of \textit{fibred coarse embeddability}, generalizing the notion of coarse embeddability, to detect the maximal coarse Baum-Connes conjecture. Shortly afterwards, X. Chen, Q. Wang and X. Wang in \cite{ChenWangWangFibredCoarse} established that $\Gamma$ is a-T-menable if and only if $\square\Gamma$ admits a fibred coarse embedding into Hilbert space. This result was also proved by M. Finn-Sell in \cite{Finn-SellFibred} using groupoid method. The above statements had been generalized to the case of $L^p$-spaces by S. Arnt \cite{ArntFibredLp}, to the case of uniformly convex Banach spaces by G. Li and X. Wang \cite{LiWangFibred}. If we consider warped cones (see Definition \ref{DefWarpedCones}) rather than box spaces, there are some similar results obtained by J. Roe \cite{RoeWarpedCone}, D. Sawicki and J. Wu \cite{SawickiWu}, Q. Wang and Z. Wang \cite{2WangWarpedCone}. \par
    A representation $\pi$ of a locally compact group $G$ on a Hilbert space $H$ is \textit{uniformly bounded}, if there exists a constant $L\geq 1$ such that $||\pi(g)||\leq L$ for any element $g$ in $G$. In \cite{Julg1994}\cite{Julg2019}, P. Julg suggested that Baum-Connes conjecture should be solved for real rank one Lie groups by using M. Cowling's strip of uniformly bounded representations \cite{CowlingU.B.Rep}. If we replace the ``isometric" action with the ``uniformly bounded" action in the definition of a-T-menability, we can obtain a more general concept called \textit{uniformly bounded a-T-menability} (see Definition \ref{UBa-T-menable}). And Y. Shalom conjectured that any hyperbolic group has uniformly bounded a-T-menability \cite{NowakGroupAction}. Recently, S. Nishikawa verified this conjecture for $Sp(n,1)$ (the isometry groups of $n$-dimensional quaternion hyperbolic spaces) \cite{NishikawaSpn1}. It was kind of a surprise, since $Sp(n,1)$ has property (T), whence cannot be a-T-menable when $n$ is larger than 2. \par
    In this paper, we introduce the concept of \textit{uniformly bounded fibred coarse embeddability} of metric spaces (see Definition \ref{ubfibred}) generalizing the concept of fibred coarse embeddability and discuss its relationship with the concept of uniformly bounded a-T-menability of groups. That is, for box spaces (see Definition \ref{DefBoxSpace}), we obtain the following theorem:
\begin{theorem}(see Theorem \ref{MainTheorem})
Let $\Gamma$ be a finitely generated residually finite group with a sequence of finite index normal subgroups $\Gamma=\Gamma_0\unrhd \Gamma_1\unrhd \cdots \unrhd \Gamma_n \unrhd \cdots$ such that $\bigcap_{i\in\mathbb{N}}\Gamma_i=\{e\}$. Let $\square\Gamma$ be the box space associated with this sequence. Then $\Gamma$ is uniformly bounded a-T-menable if and only if $\square\Gamma$ admits a uniformly bounded fibred coarse embedding into Hilbert space.
\end{theorem}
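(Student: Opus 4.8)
The plan is to prove the two implications separately, transporting data back and forth between the affine action on $\Gamma$ and the fibred structure on $\square\Gamma$. Throughout I write a uniformly bounded affine action (Definition \ref{UBa-T-menable}) as $\alpha_g(v)=\pi(g)v+b(g)$, where $\pi$ is a uniformly bounded representation with $\|\pi(g)\|,\|\pi(g)^{-1}\|\le L$ and $b$ is the associated cocycle, $b(gh)=\pi(g)b(h)+b(g)$. The only piece of ``geometry'' I need is the standard observation that, since $\Gamma$ is finitely generated and $\bigcap_i\Gamma_i=\{e\}$, for every $R>0$ there is an index $i(R)$ so that for all $i\ge i(R)$ the quotient map $\Gamma\to\Gamma/\Gamma_i$ is injective on balls of radius $R$; hence balls of radius $R$ in the $i$-th component of $\square\Gamma$ are isometric to balls of radius $R$ in $\Gamma$ via a choice of lift. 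I also record the elementary distortion estimate underlying Lemma \ref{ActionImplyEmbed} in the uniformly bounded setting: since $b(y)-b(z)=\pi(z)\,b(z^{-1}y)$, one has $\tfrac1L\|b(z^{-1}y)\|\le\|b(y)-b(z)\|\le L\|b(z^{-1}y)\|$, so that $y\mapsto b(y)$ is a genuine coarse embedding of $\Gamma$ with $\rho_-(t)=\tfrac1L\inf_{|\eta|\ge t}\|b(\eta)\|$ and $\rho_+$ linear.

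For the implication $(\Rightarrow)$, assume $\Gamma$ is uniformly bounded a-T-menable with data $(\pi,b,L)$. I would take the constant field $H_x\equiv H$ on $\square\Gamma$, the zero section $s\equiv 0$, and, for $i\ge i(R)$ and a center $x$ in the $i$-th component, choose a lift $\tilde x\in\Gamma$ of $x$ and set $t_x(y)(v)=\alpha_{\tilde y}(v)$ for $y\in B(x,R)$, where $\tilde y\in B(\tilde x,R)$ is the unique lift of $y$. Each $t_x(y)$ is an affine isomorphism with linear part $\pi(\tilde y)$ of norm at most $L$, so the trivializations are uniformly bounded. The distortion condition of Definition \ref{ubfibred} then reads $\|t_x(y)(s(y))-t_x(z)(s(z))\|=\|b(\tilde y)-b(\tilde z)\|$, which lies between $\rho_-(d(y,z))$ and $\rho_+(d(y,z))$ by the estimate above, since $|\tilde z^{-1}\tilde y|=d(y,z)$. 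Changing the center from $x$ to $x'$ replaces the lift by $\gamma\tilde y$ for a single $\gamma\in\Gamma_i$ independent of $y$ on the (connected) overlap, so the transition $t_x(y)\circ t_{x'}(y)^{-1}=\alpha_{\gamma^{-1}}$ is a constant uniformly bounded affine map. Taking $K_R=\bigsqcup_{i<i(R)}\Gamma/\Gamma_i$ together with the finitely many components whose separation is at most $2R$ (a bounded set) handles all remaining cases, and yields a uniformly bounded fibred coarse embedding.

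For $(\Leftarrow)$, assume a uniformly bounded fibred coarse embedding of $\square\Gamma$ with constant $L$ and functions $\rho_\pm$. The key idea — which is what keeps the representation uniformly bounded — is that a \emph{fixed} $\gamma\in\Gamma$ should never be realized by chaining many transitions (which would cost $L^{|\gamma|}$), but by a \emph{single} transition at a large enough scale. Concretely, fix $\gamma\in\Gamma$; for $i$ large I would take a center $x_i$ (the base coset) and the translated center $\gamma\cdot x_i$, whose balls of radius $R\ge|\gamma|$ overlap, and consider the single transition map $t_{x_i,\gamma x_i}$, which is uniformly bounded affine of norm at most $L$. Its linear part $\pi_i(\gamma)$ satisfies $\|\pi_i(\gamma)\|\le L$, and the cocycle condition for transitions on triply overlapping balls gives $\pi_i(\gamma\gamma')=\pi_i(\gamma)\pi_i(\gamma')$ whenever $i$ is large relative to $|\gamma|,|\gamma'|$. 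Setting $b_i(\gamma)=t_{x_i}(\gamma x_i)(s(\gamma x_i))-t_{x_i}(x_i)(s(x_i))$, the distortion condition forces $\rho_-(|\gamma|)\le\|b_i(\gamma)\|\le\rho_+(|\gamma|)$ uniformly in $i$. Passing to an ultralimit along a nonprincipal ultrafilter $\mathcal U$ on $\N$, in the ultraproduct Hilbert space $\prod_i H/\mathcal U$, I expect $\pi(\gamma)=\lim_{\mathcal U}\pi_i(\gamma)$ to be an \emph{exact} uniformly bounded representation with $\|\pi(\gamma)\|\le L$, and $b(\gamma)=\lim_{\mathcal U}b_i(\gamma)$ to be an associated cocycle with $\rho_-(|\gamma|)\le\|b(\gamma)\|\le\rho_+(|\gamma|)$; the lower bound gives metric properness, hence uniform bounded a-T-menability.

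The main obstacle will be the $(\Leftarrow)$ limiting argument. Three points need care: (i) \emph{uniform boundedness must survive the limit}, which is exactly why one must phrase $\pi_i(\gamma)$ through a single transition at scale $R\ge|\gamma|$ rather than through a product of $\sim|\gamma|$ edge-transitions — it is here that the uniformly bounded notion (a single $L$ for all of $\Gamma$) is the correct target and where the whole argument could fail if the definition of Definition \ref{ubfibred} did not supply a global bound on the individual fibre and transition maps; (ii) the passage from the \emph{almost} multiplicativity of $\pi_i$ and the \emph{almost} cocycle identity $b_i(\gamma\gamma')\approx\pi_i(\gamma)b_i(\gamma')+b_i(\gamma)$ to exact identities in the ultralimit, together with independence of the construction from the choices of lifts, centers, and components; and (iii) verifying that properness genuinely persists, i.e.\ that $\{\gamma:\|b(\gamma)\|\le C\}$ is finite, which follows from the \emph{uniform} lower bound $\rho_-\to\infty$ but must be checked against the degeneration that can occur in ultraproducts. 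I expect (i) and (ii) to constitute the technical heart, with the bookkeeping of ``overlap and triple-overlap'' regions and the set $K_R$ providing the ranges of validity that make the error terms vanish as $i\to\infty$.
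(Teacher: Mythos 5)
Your forward implication is essentially the paper's own argument (Lemma~\ref{ActionImplyEmbed} combined with Theorem~\ref{EquivariantAndFibred}): the paper packages the trivializations through the quotient field $(\Gamma\times H_{\alpha^{(n)}})/\Gamma_n$ while you use a constant field with $t_x(y)=\alpha_{\tilde y}$ and zero section, but the data are the same, and the transitions are deck elements acting through $\alpha$ in both cases. (Only a bookkeeping point: the deck element identifying two systems of lifts is constant on an overlap because the covering is faithful at scale comparable to $2R$, not because the overlap is connected.) The genuine gap is in your backward implication, and it sits exactly where you flagged risk (ii) --- except that the problem is not that error terms must be shown to vanish in the ultralimit; with your single-center definitions the errors are $O(1)$ and cannot vanish. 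Concretely, if $\pi_i(\gamma)$ is (the linear part of) the single transition $t_{B(x_i)B(\gamma x_i)}$, the triple-overlap identity only gives
$$t_{B(x_i)B(\gamma\gamma' x_i)}=t_{B(x_i)B(\gamma x_i)}\,t_{B(\gamma x_i)B(\gamma\gamma' x_i)},$$
so multiplicativity of $\pi_i$ would require $t_{B(\gamma x_i)B(\gamma\gamma' x_i)}=t_{B(x_i)B(\gamma' x_i)}$. Nothing in Definition~\ref{ubfibred} relates these two transitions: a fibred coarse embedding carries no equivariance, and the two affine isomorphisms may differ by an arbitrary affine isomorphism of norm at most $L^2$, uniformly in $i$. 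This bounded discrepancy survives every ultralimit, so your limiting $\pi$ need not be a representation and $b$ need not be a cocycle; the same defect destroys the approximate cocycle identity for $b_i$.

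The paper's proof of Theorem~\ref{MainTheorem} avoids this by inducing over \emph{all} centers simultaneously rather than fixing one: on $\bigoplus_{z\in X_{n_r}}H$ with the normalized inner product it sets $b_r(x)=(c^z_r(x))_{z}$, where $c^z_r(x)=t_{C_z}(z)(s(z))-t_{C_z}(zx)(s(zx))$, and defines the linear part as the shift $z\mapsto zx$ twisted by transitions, $T_r(x)(h)=(t_{C_zC_{zx}}(h_{zx}))_{z}$. Since $z$ itself lies in $C_z\cap C_{zx}\cap C_{zxy}$ whenever $|x|,|y|,|xy|\le r$, the triple-overlap identity now applies at a common point of all three sets and yields the \emph{exact} relations $T_r(xy)=T_r(x)T_r(y)$ and $b_r(xy)=T_r(x)(b_r(y))+b_r(x)$ at every finite stage; lifting through the $r$-isometric quotient map $\Gamma\to\Gamma/\Gamma_{n_r}$ then produces a genuine $r$-local uniformly bounded affine action in the sense of Definition~\ref{r-local}, and only after that is the ultraproduct Lemma~\ref{LocalGlobal} invoked. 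The averaging also preserves the bounds $\rho_-(d(e,x))\le\|b_r(x)\|\le\rho_+(d(e,x))$, which give properness in the limit. Your observations (i) and (iii) are correct and are indeed features of this construction (single transitions at scale $r\ge|\gamma|$ are what keep the uniform bound $L$; the uniform lower bound $\rho_-$ is what gives properness), but without the induction over centers --- the key idea you are missing --- the ultralimit step cannot be carried out.
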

And for warped cones (see Definition \ref{DefWarpedCones}), we have a similar theorem:
\begin{theorem} (see Theorem \ref{MainTheorem2})
Let $\Gamma \curvearrowright Y$ be an action of a finitely generated group on a compact metric space.
\begin{enumerate}
\item Assume the action is free and linearizable in a Hilbert space if $\Gamma$ is uniformly bounded a-T-menable, then $\mathcal{O}_{\Gamma}Y$ admits a uniformly bounded fibred coarse embedding into Hilbert space.
\item Assume the action admits an invariant Borel probability measure $\mu$ on $Y$ and is essentially free with respect to $\mu$. If $\mathcal{O}_{\Gamma}Y$ admits a uniformly bounded fibred coarse embedding into Hilbert space, then $\Gamma$ is uniformly bounded a-T-menable.
\end{enumerate}
\end{theorem}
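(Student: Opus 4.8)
The plan is to follow the strategy of the box space result (Theorem \ref{MainTheorem}), with the dynamical system $\Gamma \curvearrowright Y$ and the continuous cone direction playing the roles of the finite quotients and of the discrete levels of $\square\Gamma$, respectively. Throughout I exploit the fact that the warped metric on $\mathcal{O}_\Gamma Y$ decomposes any short path into two kinds of moves: \emph{generator jumps} $y \mapsto s\cdot y$ ($s \in S$), each of cost $1$, and \emph{continuous moves} within a level $t$, of cost $t\, d_Y$. Accordingly I will encode the two kinds of displacement separately and take an orthogonal sum.

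For assertion (1), fix a metrically proper affine action $\alpha_g(\xi) = \pi(g)\xi + b(g)$ of $\Gamma$ on a Hilbert space $H$ with $\|\pi(g)\| \le L$ for all $g$, so that the cocycle satisfies $\rho_-(|g|) \le \|b(g)\| \le CL\,|g|$ for suitable control functions; the lower bound is metric properness and the upper bound follows from $b(s_1\cdots s_k) = \sum_i \pi(s_1\cdots s_{i-1}) b(s_i)$ together with $\|\pi(\cdot)\| \le L$. Using the hypothesis that the action is \emph{linearizable in a Hilbert space}, fix an equivariant coarse embedding $\iota\colon Y \to H'$ intertwining $\Gamma \curvearrowright Y$ with a uniformly bounded representation $\sigma$ of $\Gamma$ on $H'$, that is $\iota(g\cdot y) = \sigma(g)\iota(y)$, with $\|\iota(y)-\iota(y')\|$ comparable to $d_Y(y,y')$. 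I then set $H_p = H \oplus H'$ for every point $p=(y,t)$, and on a ball $B(p_0,r)$ of the warped cone (with $p_0=(y_0,t)$ and $t$ large) I write each $p \in B(p_0,r)$ as $y \approx g_p\cdot y_0$ for a single word $g_p$ of length $\le r$ plus a continuous move; the trivialization $w_p$ is the affine map whose linear part is $\pi(g_p)\oplus\sigma(g_p)$ and which sends the section to $b(g_p)\oplus t\big(\iota(y)-\iota(y_0)\big)$.

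The decisive point is that each trivialization uses a \emph{single} group element $g_p$, so its linear part has norm $\max\{\|\pi(g_p)\|,\|\sigma(g_p)\|\} \le L$ regardless of the word length of $g_p$; likewise a transition $w_{p'}w_p^{-1}$ has linear part $\pi(g_{p'}g_p^{-1})\oplus\sigma(g_{p'}g_p^{-1})$, again a single element and hence of norm $\le L$. This is exactly the uniform bound required by Definition \ref{ubfibred}, and it is the mechanism by which uniform boundedness of $\pi$ translates into uniform boundedness of the fibred data. Freeness guarantees that $g_p$ is well defined on each ball (the orbit map is injective at the relevant scale), so the section and trivializations are coherent and the cocycle condition on overlaps holds. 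The control functions $\rho_\pm$ then come from combining the cocycle estimates on the $H$-component (governing the generator jumps) with the bi-Lipschitz estimates for $t\,\iota$ on the $H'$-component (governing the continuous moves), and both tend to infinity; this yields the uniformly bounded fibred coarse embedding.

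For assertion (2) I reverse the construction, extracting a uniformly bounded proper affine action of $\Gamma$ from the fibred data. From the uniformly bounded fibred coarse embedding, for each level $t$ and each generator word $g$ I consider, in a trivialized frame, the displacement vector $f_t(g\cdot y, t) - f_t(y,t)$ together with the accompanying (uniformly bounded, $\le L$) linear part. Using the invariant probability measure $\mu$, I form the direct integral $\int_Y^\oplus H_y\, d\mu(y)$ and pass to an ultralimit as $t\to\infty$; this produces a Hilbert space carrying a uniformly bounded representation $\pi$ of $\Gamma$ (the bound $L$ survives the limit) together with a cocycle $b$. Essential freeness with respect to $\mu$ ensures that for each nontrivial $g$ the point $g\cdot y$ differs from $y$ for $\mu$-almost every $y$, so the warped distance between $(g\cdot y,t)$ and $(y,t)$ is of order $|g|$ and the lower control $\rho_-$ transfers to properness $\|b(g)\| \ge \rho_-(|g|) \to \infty$. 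Hence $\Gamma$ is uniformly bounded a-T-menable. The main obstacle is this assertion (2): passing from the purely local, level-dependent fibred data to a single globally defined affine action of $\Gamma$. The delicate points are the measurability and well-definedness of the limiting cocycle in the direct integral over $(Y,\mu)$, and the verification that metric properness is preserved — this is precisely where the invariant measure and essential freeness are indispensable, since without them a nontrivial $g$ could fix a set of positive measure and the candidate cocycle would fail to be proper. In assertion (1) the corresponding difficulty is mostly bookkeeping: ensuring that the locally defined elements $g_p$ patch coherently across overlapping balls so that the trivializations satisfy the cocycle condition of Definition \ref{ubfibred}, which is where freeness and linearizability are used.
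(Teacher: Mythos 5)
Your overall strategy coincides with the paper's. Part (1) is, in effect, the paper's Theorem \ref{EquivariantAndFibred} specialized to the straightening warped cone of Sawicki--Wu: the key mechanism you isolate --- each trivialization and each transition is implemented by a \emph{single} group element acting through the uniformly bounded representation, so the bound $L$ is independent of word length --- is exactly the mechanism in that proof. Part (2) is the paper's argument almost verbatim: trivialized displacement vectors $t_{C_y}(y)(s(y))-t_{C_y}(\gamma^{-1}y)(s(\gamma^{-1}y))$, assembled over $(Y,\mu)$ into $L^{2}(Y,\mu;H)$, yield $r$-local uniformly bounded affine actions, which are glued into a global uniformly bounded affine action on an ultraproduct (Lemma \ref{LocalGlobal}), with the lower control function giving properness.

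Two points need repair, one of which is a genuine error as written. First, in part (1) your fibres are $H\oplus H'$ and your section over a ball centred at $p_0=(y_0,t)$ is sent by the trivialization to $b(g_p)\oplus t\bigl(\iota(y)-\iota(y_0)\bigr)$. For two points $(y_0,t)$ and $(y_0,t')$ in the same ball, differing only in the radial coordinate, this vector is $0$ in both cases (there $g_p=e$ and $y=y_0$), so condition (1) of Definition \ref{ubfibred} forces $\rho_-(|t-t'|)\le 0$ with $|t-t'|$ as large as the radius $r$; since this happens for every $r$, it contradicts $\rho_-\to\infty$. You must add a summand recording $t$ itself; this is precisely the middle factor in the paper's target $H\oplus C\oplus H_0$ for the embedding of the straightening cone. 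Second, both the well-definedness of $g_p$ in (1) and your claim in (2) that $d_{\Gamma}\bigl((t,y),(t,gy)\bigr)$ is of order $|g|$ for large $t$ do not follow from pointwise (essential) freeness of the action alone: they require the uniform statement that for every $r>0$ the quotient map $(\Gamma\times\mathcal{O}Y,d^{1})\to\mathcal{O}_{\Gamma}Y$ is $r$-isometric outside a bounded set, i.e.\ asymptotic faithfulness, which is Lemma \ref{AsymFaithWarpedCone} (Sawicki--Wu) and uses freeness together with compactness of $Y$. You invoke this implicitly; citing it (after first reducing from essentially free to free by deleting the $\mu$-null set $Y_0$, as the paper does) closes that gap. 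With these two repairs your argument is the paper's proof.
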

We should note that the above two theorems also hold for uniformly convex Banach spaces, e.g. $L^p$-spaces. Finally, we will give some examples to illustrate the differences between uniformly bounded fibred coarse embeddability and fibred coarse embeddability. \par

    The paper is organized as follows: In Section 2, we recall some results about uniformly bounded a-T-menable groups. In Section 3, we introduce the concept and properties of uniformly bounded fibred coarse embeddability of metric spaces. In Section 4, we state and prove the above two main theorems. In the end, we give some examples. 
    
\begin{remark}
For simplicity of notation, we will write ``u.b." instead of ``uniformly bounded" in the rest of this paper.
\end{remark} 

\subsection*{Acknowledgements}
    The authors would like to thank Qin Wang, Guoliang Yu and Jiawen Zhang for providing many valuable suggestions and comments. This work is partially supported by NSFC (No. 11771143, No.11801178).

\section{uniformly bounded a-T-menability}
In this section, we recall the concept of \textit{uniformly bounded a-T-menability} for groups, generalizing the notion of \textit{a-T-menability} defined by M. Gromov \cite{Gromovasymptotic}\cite{HaagerupProperty}. Then we discuss the coarse embeddability of  u.b. a-T-menable groups.
\begin{definition}\label{def:affinemap}
A map $\psi:H \rightarrow H$ is called an \textit{affine map} if there exists a linear map $T$ on $H$ and an element $x\in H$ such that $\psi(h)=T(h)+x$ for any $h\in H$. The \textit{norm} of $\psi$, denoted $\|\psi\|$, is defined to be the norm of $T$. And $\psi$ is called an $\textit{affine isomorphism}$, if there exists an affine map $\phi$ on $H$ such that $\psi\phi=\phi\psi=I$, where $I$ is identity map on $H$. Such $\phi$ is unique if $\phi$ exists, and thus we may denote $\phi$ by $\psi^{-1}$. Let Affine($H$) be the set consisting of all affine isomorphisms on $H$.
\end{definition}
\begin{definition}\label{UBa-T-menable}
Let $G$ be a second countable, locally compact group, say $G$ is \textit{uniformly bounded a-T-menable}, if there exists a continuous affine action $\alpha: G\rightarrow$ Affine($H$) satisfying,
\begin{itemize}
\item it is a uniformly bounded action, i.e. there exists a constant $L>0$ such that $||\alpha_g||\leq L$ for all $g\in G$;
\item it is a metrically proper action, i.e. for all bounded subsets $B$ of $H$, the set $\{g\in G: \alpha_g(B)\cap B\not=\emptyset\}$ is relatively compact in $G$. 
\end{itemize}
In particular, say $G$ is \textit{a-T-menable} if there exists a continuous isometric action which satisfies the above two conditions.
\end{definition}

We have the following conjecture about u.b. a-T-menability (see Conjecture 35 of \cite{NowakGroupAction}).
\begin{conjecture} {(Y. Shalom)}
Every hyperbolic group is u.b. a-T-menable. 
\end{conjecture}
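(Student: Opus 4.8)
Since the final statement is an open conjecture of Shalom rather than a theorem proved in the paper, I can only propose a strategy I would attempt. The plan is to produce, for a hyperbolic group $\Gamma$, a u.b. linear representation $\pi$ on a separable Hilbert space $H$ together with a \emph{proper} $1$-cocycle $b\colon\Gamma\to H$, i.e.\ $b(gh)=\pi(g)b(h)+b(g)$. The affine action $\alpha_g(v)=\pi(g)v+b(g)$ then has linear part $\pi$, so $\|\alpha_g\|=\|\pi(g)\|\le L$ is uniformly bounded, while properness of $b$ (meaning $\|b(g)\|\to\infty$ as $|g|\to\infty$) is exactly metric properness of $\alpha$ in the sense of Definition~\ref{UBa-T-menable}. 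Thus the conjecture reduces to building a u.b.\ representation carrying a proper cocycle. The crucial point is that this is \emph{impossible} on the unitary line when $\Gamma$ has property (T), since a coboundary of a unitary representation is automatically bounded and property (T) forbids proper isometric actions; leaving the isometric world into the u.b.\ regime is precisely what creates the room needed, as Nishikawa's treatment of $Sp(n,1)$ illustrates.

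First I would set up the analysis on the Gromov boundary $\partial\Gamma$ equipped with a Patterson--Sullivan measure $\mu$ of conformal dimension $D$, recording the Busemann cocycle and the Gromov product, which furnish a H\"older structure together with the Radon--Nikodym cocycle $c(g,\xi)$ measuring the quasiconformal distortion of $\mu$. On $L^2(\partial\Gamma,\mu)$ I would then consider the analytic family of boundary representations
$$\pi_z(g)f(\xi)=c(g,\xi)^{z}\,f(g^{-1}\xi),\qquad z\in\C,$$
which is unitary at the symmetric exponent $z=\tfrac12$ and generalizes the Pytlik--Szwarc family for free groups and Cowling's strip of u.b.\ representations for rank-one Lie groups.

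The analytic heart would be to show that $\pi_z$ is \emph{uniformly bounded} for $z$ in an open strip around the critical line: factoring $\pi_z$ through the quasi-regular representation and a multiplier, I would reduce to uniform $L^2$-boundedness of the associated intertwining kernels and attempt a Schur test whose weights are controlled by the exponential contraction of the Gromov product under the $\Gamma$-action. Granting this, I would differentiate the family at $z=\tfrac12$ to manufacture a u.b.\ $1$-cocycle $b$ for $\pi_{1/2}$, the same mechanism by which conditionally negative type functions are extracted from one-parameter families of positive type functions, now in the u.b.\ category, and—crucially—arranging that the derivative yields an \emph{unbounded} cocycle rather than a bounded coboundary. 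Properness would finally be read off from the decay in $z$ of the matrix coefficients $\langle\pi_z(g)\mathbf 1,\mathbf 1\rangle$, governed by the Busemann function, which along a geodesic is comparable to the word length $|g|$, forcing $\|b(g)\|\to\infty$.

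The main obstacle is the uniform-boundedness step. For $SO(n,1)$, $SU(n,1)$, and (in Nishikawa's work) $Sp(n,1)$ this is supplied by Cowling's deep analytic continuation of the principal series, which rests on homogeneity and explicit intertwiners; a general hyperbolic group has no such structure, so the estimates must be extracted purely from its coarse geometry, and it is genuinely unclear that the boundary representations stay bounded off the unitary line in the uniform sense required. This is exactly why the conjecture is open beyond the known rank-one examples, and a secondary difficulty is the simultaneous control of properness and uniform boundedness, since the cocycle must grow while the linear parts remain bounded. As an alternative route I would also keep in mind the Mineyev--Yu proper isometric actions of hyperbolic groups on $\ell^p(\Gamma\times\Gamma)$ for large $p$, attempting a complex-interpolation bridge from the $\ell^p$ action toward a u.b.\ Hilbert cocycle in the same Cowling-type spirit.
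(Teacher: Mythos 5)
This statement is an open conjecture in the paper: the authors state it (attributing it to Y.~Shalom, citing \cite{NowakGroupAction}) and offer no proof, only the remark that S.~Nishikawa \cite{NishikawaSpn1} has verified it for the groups $Sp(n,1)$. So there is no argument in the paper to compare yours against, and your proposal --- which you honestly present as a strategy rather than a proof --- cannot be accepted as a proof of the statement. Your opening reduction is correct and matches the framework the paper actually uses: writing $\alpha_g(v)=\pi(g)v+b(g)$ with $\pi$ u.b.\ and $b$ a proper cocycle is exactly the structure exploited in Lemma~\ref{ActionImplyEmbed}, and properness of $b$ is equivalent to metric properness of $\alpha$ once $\sup_g\|\pi(g)\|\le L$. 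But the two steps that would constitute the mathematical content are precisely the open part of the conjecture: (i) uniform boundedness of the boundary family $\pi_z$ off the unitary line for an arbitrary hyperbolic group, where no analogue of Cowling's analytic continuation of the principal series is available, and (ii) the production of a \emph{proper} (not merely nonzero) cocycle from differentiating the family. You name obstacle (i) yourself; naming an obstacle does not discharge it.

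There is also a concrete error in step (ii) as you state it. You propose to ``differentiate the family at $z=\tfrac12$ to manufacture a u.b.\ $1$-cocycle $b$ for $\pi_{1/2}$,'' where $\pi_{1/2}$ is \emph{unitary}. For a hyperbolic group with property (T) --- and such groups exist, e.g.\ cocompact lattices in $Sp(n,1)$, which is the very class motivating the paper's Examples~\ref{Example2} --- the Delorme--Guichardet theorem forces every $1$-cocycle of every unitary representation to be bounded. Hence a proper cocycle for the unitary $\pi_{1/2}$ is impossible in general, and your construction, if it produced anything, would produce a bounded coboundary. The cocycle must instead be taken over a genuinely non-unitarizable u.b.\ representation (this is what Nishikawa's construction for $Sp(n,1)$ does, and it is why the u.b.\ category creates room at all); your sketch conflates the unitary point of the family with the u.b.\ deformation around it. As written, the proposal is a reasonable description of a research program, but it is not a proof, and one of its pivotal steps is provably wrong in the stated form.
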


Recently, in \cite{NishikawaSpn1}, S. Nishikawa verified the above conjecture for Lie groups $Sp(n,1)$ (see Example \ref{Example2} for precise definition).
\begin{theorem}{(S. Nishikawa)} \label{NishikawaTheorem}
For any $n\geq 1$, the simple rank one Lie group $Sp(n,1)$ is u.b. a-T-menable.
\end{theorem}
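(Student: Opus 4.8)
The plan is to realise u.b.\ a-T-menability through an affine action of the form $\alpha_g(v)=\pi(g)v+b(g)$, where $\pi\colon Sp(n,1)\to\GL(H)$ is a \emph{uniformly bounded} representation on a Hilbert space $H$ and $b\colon Sp(n,1)\to H$ is a $\pi$-cocycle, meaning $b(gh)=\pi(g)b(h)+b(g)$. Such an $\alpha$ is automatically an affine action, and $\|\alpha_g\|=\|\pi(g)\|$ is uniformly bounded in $g$ precisely when $\pi$ is; continuity is built in by construction. The whole problem therefore reduces to producing a single uniformly bounded representation of $Sp(n,1)$ carrying a \emph{metrically proper} cocycle, i.e.\ one with $\|b(g)\|\to\infty$ as $g$ leaves every compact set. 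I would stress at the outset why this is subtle: for $n\ge 2$ the group $Sp(n,1)$ has Kazhdan's property (T), so every \emph{unitary} representation has only bounded cocycles, and the gain must come entirely from enlarging the class of representations from unitary to merely uniformly bounded.

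The source of the required representation should be Cowling's analytic strip of uniformly bounded representations of rank-one simple Lie groups. For $Sp(n,1)$ one has the spherical principal and complementary series $\pi_s$ parametrised by $s$ in a strip of $\C$, and Cowling showed that, after a suitable similarity, these extend to a family of uniformly bounded representations with a \emph{single} bound $L$ valid on a whole interval of parameters reaching past the unitary range, in particular up to the critical point $s_0$ where the representation degenerates toward the trivial one. The structural input I would extract is: (i) uniform boundedness $\sup_g\|\pi_s(g)\|\le L$ with $L$ independent of $g$ and of $s$ in a neighbourhood of $s_0$; and (ii) real-analytic dependence of the matrix coefficients $g\mapsto\langle\pi_s(g)\xi,\eta\rangle$ on $s$.

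Given such a family, the cocycle should be produced by differentiation at the critical parameter. If $\xi_0\in H$ is a $\pi_{s_0}$-invariant vector (the emerging trivial subrepresentation at the endpoint), then differentiating the homomorphism identity $\pi_s(gh)=\pi_s(g)\pi_s(h)$ at $s_0$, evaluating at $\xi_0$, and using $\pi_{s_0}(h)\xi_0=\xi_0$ yields exactly the cocycle relation $b(gh)=\pi_{s_0}(g)b(h)+b(g)$ for $b(g):=\frac{d}{ds}\big|_{s_0}\big(\pi_s(g)\xi_0\big)$. This is the u.b.\ analogue of the classical differentiation argument that exhibits proper cocycles for $SO(n,1)$ and $SU(n,1)$ out of their complementary series; the novelty here is that, because $Sp(n,1)$ has property (T), the endpoint representation $\pi_{s_0}$ is necessarily non-unitary, and it is precisely Cowling's uniform bound that keeps the construction inside the uniformly bounded world.

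The main obstacle will be \emph{properness} of $b$. One must show $\|b(g)\|\to\infty$, and for this I would pass to the asymptotics of the matrix coefficients along the geometry of quaternionic hyperbolic space: parametrise $g$ by its Cartan $KA^+K$ coordinate $t=t(g)$ (the distance moved in the symmetric space) and establish a lower bound $\|b(g)\|\gtrsim\phi(t(g))$ with $\phi\to\infty$, by differentiating the known exponential-in-$t$ decay rate of the spherical function $\varphi_s$ in the parameter $s$ at $s_0$. Controlling this $s$-derivative uniformly — showing that the derivative of the matrix coefficient genuinely grows rather than merely staying bounded — is the delicate analytic heart, and is where the precise form of Cowling's similarity transform and the Harish-Chandra expansion of $\varphi_s$ enter. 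Once the bound $\|b(g)\|\gtrsim\phi(t(g))$ is in hand, metric properness of $\alpha$ follows because $t(g)\to\infty$ exactly as $g$ leaves compact sets, completing the verification that $Sp(n,1)$ is u.b.\ a-T-menable.
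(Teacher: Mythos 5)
The first thing to note is that the paper contains no proof of this statement: Theorem \ref{NishikawaTheorem} is quoted as an external result from \cite{NishikawaSpn1} and used as a black box (in Example \ref{Example2} and the final example). So there is no ``paper's own proof'' to measure you against; the comparison has to be with Nishikawa's cited argument.

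Against that benchmark, your outline has the right shape: Nishikawa's proof is indeed of the form ``uniformly bounded representation built on Cowling's results, plus a metrically proper $1$-cocycle,'' your reduction of u.b.\ a-T-menability to producing a u.b.\ representation $\pi$ with a proper $\pi$-cocycle $b$ is correct, and your formal derivation of the cocycle identity by differentiating $\pi_s(gh)=\pi_s(g)\pi_s(h)$ at a parameter with an invariant vector is sound. But as a proof there are two genuine gaps, and they sit exactly where the content of the theorem lives. First, you assume Cowling's family is uniformly bounded, with a single constant $L$, on a neighbourhood of the degenerate parameter $s_0$ where the trivial representation emerges, and that the invariant vector and analytic dependence survive there. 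For $Sp(n,1)$ this is precisely the delicate point: the unitary complementary series terminates strictly before the trivial representation (that gap \emph{is} property (T)), and Cowling's uniform boundedness is obtained by conjugating $\pi_s$ by $s$-dependent similarity operators which can degenerate at the endpoint; one cannot simply quote the strip as a package containing $s_0$ together with its invariant vector. This is why Nishikawa does not differentiate the spherical principal series but instead modifies Julg's construction on the boundary sphere $G/P$, using Cowling's theorem as an ingredient inside that construction. Second, properness of $b$ is not established in your text, only planned. Since for a property (T) group every cocycle over a \emph{unitary} representation is bounded, properness here cannot follow from soft or general considerations; it requires the explicit asymptotic estimate of $\|b(g)\|$ in terms of the distance in quaternionic hyperbolic space, which is the actual analytic work of \cite{NishikawaSpn1}. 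As it stands, your proposal is an accurate roadmap to the literature rather than a proof.
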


\begin{remark}
We notice that the group $Sp(n,1)$ has property(T) for $n\geq 2$ (see \cite{PropertyT}), thus it cannot be an a-T-menable group. However, the above theorem tells us that there exists a constant $L>1$ such that it has a uniformly bounded (with upper bound $L$), metrically proper action on Hilbert space.  
\end{remark}

We finish this section by considering coarse embeddability of u.b. a-T-menable groups.
\begin{lemma}\label{ActionImplyEmbed}
Let $\Gamma$ be a finitely generated group equipped with a word length metric. If $\Gamma$ is u.b. a-T-menable, then $\Gamma$ admits an equivariant coarse embedding into Hilbert space. 
\end{lemma}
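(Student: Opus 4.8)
The plan is to build the embedding directly from the affine action, using a base point's orbit as the map and exploiting uniform boundedness of the linear part to control both sides.

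First I would write the affine action in the form $\alpha_g(v)=\pi(g)v+b(g)$, where $\pi\colon\Gamma\to\GL(H)$ is the linear part (a representation by invertible bounded operators) and $b\colon\Gamma\to H$ is the associated cocycle. Uniform boundedness gives $\|\pi(g)\|\le L$ for all $g$; moreover, since $\alpha_{g^{-1}}=\alpha_g^{-1}$ has linear part $\pi(g)^{-1}$, applying the same bound to $g^{-1}$ yields $\|\pi(g)^{-1}\|\le L$, so $\pi(g)$ is bounded below by $1/L$. Fixing a base vector $\xi_0\in H$ (for instance $\xi_0=0$), I define $f\colon\Gamma\to H$ by $f(g)=\alpha_g(\xi_0)$. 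Equivariance $f(gx)=\alpha_g(f(x))$ is immediate from $\alpha_{gx}=\alpha_g\alpha_x$, so $f$ is automatically equivariant. Writing $s=h^{-1}g$ and using the action property gives the key identity
\[
f(g)-f(h)=\alpha_h\big(\alpha_s(\xi_0)\big)-\alpha_h(\xi_0)=\pi(h)\big(\alpha_s(\xi_0)-\xi_0\big),
\]
which reduces both estimates to controlling the displacement $\|\alpha_s(\xi_0)-\xi_0\|$, where $|s|=d(g,h)$.

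For the upper bound, I would telescope along a geodesic word $s=\sigma_1\cdots\sigma_n$ with $\sigma_i\in S$ and $n=|s|$: each increment equals $\pi(\sigma_1\cdots\sigma_{k-1})\big(\alpha_{\sigma_k}(\xi_0)-\xi_0\big)$, of norm at most $L^{k-1}C$, where $C=\max_{\sigma\in S}\|\alpha_\sigma(\xi_0)-\xi_0\|<\infty$ since $S$ is finite. Summing the geometric series and applying $\|\pi(h)\|\le L$ bounds $\|f(g)-f(h)\|$ by a quantity depending only on $n=d(g,h)$, which defines a non-decreasing $\rho_+$ tending to infinity. The point to emphasize is that, in contrast to the isometric case where this control is linear, here $\rho_+$ may grow exponentially in $n$; this is harmless, because the definition of coarse embedding only requires $\rho_+$ to be non-decreasing and unbounded.

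For the lower bound, the estimate $\|\pi(h)v\|\ge\|v\|/L$ gives $\|f(g)-f(h)\|\ge \tfrac1L\|\alpha_s(\xi_0)-\xi_0\|$, so I set $\rho_-(n)=\tfrac1L\inf\{\|\alpha_s(\xi_0)-\xi_0\|:|s|\ge n\}$, which is non-decreasing by construction. The heart of the argument, and the step I expect to be the main obstacle, is verifying $\rho_-(n)\to\infty$, which is exactly where metric properness enters: if there were $R>0$ and elements $s_k$ with $|s_k|\to\infty$ but $\|\alpha_{s_k}(\xi_0)-\xi_0\|\le R$, then the bounded set $B=\{v\in H:\|v-\xi_0\|\le R\}$ would satisfy $\alpha_{s_k}(\xi_0)\in\alpha_{s_k}(B)\cap B$ for every $k$, so infinitely many distinct $s_k$ would lie in $\{g\in\Gamma:\alpha_g(B)\cap B\ne\emptyset\}$, contradicting that this set is relatively compact, hence finite, in the discrete group $\Gamma$. (The degenerate case $C=0$ forces $f$ constant, which this same properness argument excludes unless $\Gamma$ is finite, where the statement is trivial.) Combining the two estimates yields $\rho_-(d(g,h))\le\|f(g)-f(h)\|\le\rho_+(d(g,h))$, so $f$ is the desired equivariant coarse embedding.
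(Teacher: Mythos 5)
Your proof is correct and follows essentially the same route as the paper: both use the orbit map $f(g)=\alpha_g(0)=b_g$, the identity $f(g)-f(h)=\pi(h)\bigl(\alpha_{h^{-1}g}(0)-0\bigr)$, the two-sided bounds coming from $\|\pi(\cdot)\|\le L$ and $\|\pi(\cdot)^{-1}\|\le L$, and metric properness to force $\rho_-\to\infty$ (a step the paper merely asserts via $\lim_{|\gamma|\to\infty}\|b_\gamma\|=\infty$ and you spell out). The only cosmetic difference is your telescoped, possibly exponential upper bound --- since uniform boundedness gives $\|\pi(w)\|\le L$ for every prefix $w$, not just $L^{k-1}$, the displacement bound is in fact linear in $n$, and the paper instead takes $\rho_+(t)=L\sup\{\|b_\gamma\|:|\gamma|\le t\}$ over the finite ball --- but any of these choices is a valid control function.
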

\begin{proof}
Let $\alpha$ be a uniformly bounded, metrically proper affine action of $\Gamma$ on $H$, then 
      $$\alpha_{\gamma}(h)=T_{\gamma}(h)+b_{\gamma}$$
for all $\gamma\in \Gamma$ and $h\in H$, where $T_{\gamma}$ is a linear operator on $H$ satisfying $\sup_{\gamma\in \Gamma}||T_{\gamma}||\leq L$ for some constant $L$ and $\lim_{|\gamma|\rightarrow \infty}||b_{\gamma}||=\infty$.\par
Define a map $f: \Gamma \rightarrow H$ by 
      $$f(\gamma)=b_{\gamma},$$
then we have $f(\gamma'\gamma)=\alpha_{\gamma'}f(\gamma)$ which means that $f$ is an equivariant map. For each $t\in [0,\infty)$, define
      $$\rho_{-}(t)=L^{-1}\inf\{||b_{\gamma}||: |\gamma|\geq t\}$$
      $$\rho_{+}(t)=L\sup\{||b_{\gamma}||: |\gamma|\leq t\}.$$
Then for any two elements $\gamma_1,\gamma_2\in \Gamma$, we have $$||f(\gamma_1)-f(\gamma_2)||=||b_{\gamma_2(\gamma^{-1}_2\gamma_1)}-b_{\gamma_2}||=||T_{\gamma_2}(b_{\gamma^{-1}_2\gamma_1})||.$$ 
Thus
      $$||f(\gamma_1)-f(\gamma_2)||\in [L^{-1}||b_{\gamma^{-1}_2\gamma_1}||,L||b_{\gamma^{-1}_2\gamma_1}||]\subset[\rho_{-}(d(\gamma_1,\gamma_2)),\rho_{+}(d(\gamma_1,\gamma_2))].$$
Moreover, $\rho_{\pm}$ are two non-decreasing functions and $\lim_{t\rightarrow \infty}\rho_{\pm}(t)=\infty$ since $\lim_{|\gamma|\rightarrow \infty}||b_{\gamma}||=\infty$. Therefore $\Gamma$ admits an equivariant coarse embedding into Hilbert space. 
\end{proof}

\section{uniformly bounded fibred coarse embeddability}

In this section, we introduce the concept of \textit{uniformly bounded fibred coarse embedding into Hilbert space}, generalizing the notion of \textit{fibred coarse embedding into Hilbert space} defined by X. Chen, Q. Wang and G. Yu \cite{ChenWangYufibredcoarse}. Let $H$ be a separable Hilbert space, as a model space.
\begin{definition}\label{ubfibred}
Let $(X,d)$ be a metric space. $X$ is said to admit a \textit{uniformly bounded fibred coarse embedding into Hilbert space} if there exists a constant $L>0$ and
\begin{itemize}
\item a field of Hilbert spaces $(H_x)_{x\in X}$ over $X$;
\item a section $s:X\rightarrow \bigsqcup_{x\in X}H_x$ (i.e. $s(x)\in H_x$);
\item two non-decreasing functions $\rho_{-}$ and $\rho_{+}$ from $[0,\infty)$ to $(-\infty,\infty)$ with $\lim_{t\rightarrow \infty}\rho_{\pm}(t)=\infty $
\end{itemize} 
such that for any $r>0$, there exists a bounded subset $K\subset X$ for which there exists a ``trivialization"
                    $$t_C:(H_x)_{x\in C}\rightarrow C \times H$$
for each subset $C\subset X\setminus K$ of diameter less than $r$ (i.e. a map from $(H_x)_{x\in C}$ to the constant field $C\times H$ over $C$ such that the restriction of $t_C$ to the fiber $H_x$ is an affine isomorphism $t_C(x): H_x \rightarrow H$ with $\max\{||t_C(x)||,||t_C(x)^{-1}||\}\leq L$ for any $x\in C$), satisfying 
\begin{enumerate}
\item $\rho_{-}(d(x,y))\leq ||t_C(x)(s(x))-t_C(y)(s(y))||\leq \rho_{+}(d(x,y))$ for any $x,y \in C$;
\item for any two subsets $C_1,C_2\subset X\setminus K$ of diameter less than $r$ with $C_1 \cap C_2 \neq \emptyset$, there exists an affine isomorphism $t_{C_1C_2}$ on $H$ with $\max\{||t_{C_1C_2}||,||t_{C_1C_2}^{-1}||\}$ less than $L$ such that $t_{C_1}(x)t_{C_2}(x)^{-1}=t_{C_1C_2}$ for all $x\in C_1 \cap C_2$.
\end{enumerate}
In particular, $X$ is said to admit a \textit{fibred coarse embedding into Hilbert space} when $L=1$ as above.
\end{definition}   

Let $\Gamma$ be a finitely generated group with a symmetric generating set $S$. Define a \textit{word length function} $|\cdot|$ on $\Gamma$ by $|\gamma|=\min\{n:\gamma=\sigma_1 \cdots \sigma_n, \sigma_i\in S\}$ for $\gamma \in \Gamma$. It induces a \textit{word length metric} on $\Gamma$ by $d(\gamma_1,\gamma_2)=|\gamma_1^{-1}\gamma_2|$ for any $\gamma_1,\gamma_2\in \Gamma$. Note that two word length metrics defined by two different symmetric generating sets are bi-Lipschitz equivalent (see Chapter 1 of \cite{NowakYu}). 

    Obviously, coarse embeddability implies u.b. fibred coarse embeddability. However, the following theorem tells us that for normed space or a finitely generated group equipped with a word length metric, these two concepts are identical. Nevertheless, there are many metric spaces which admit u.b. fibred coarse embedding into Hilbert space but do not admit coarse embedding into Hilbert space. We will give such examples in Section \ref{Examples}.
    
\begin{theorem}[\cite{DGLY-uniformembed}]
Let $X$ be a metric space. If there exist two non-decreasing functions $\rho_{-}$ and $\rho_{+}$ from $[0,\infty)$ to $(-\infty,\infty)$ with $\lim_{t\rightarrow \infty}\rho_{\pm}(t)=\infty $ such that for every finite subset $A\subset X$, there exists a map $f_A: A\rightarrow H$ satisfying 
    $$\rho_{-}(d(x,y))\leq ||f_A(x)-f_A(y)||\leq \rho_{+}(d(x,y))$$
for any $x,y\in A$, then $X$ admits a coarse embedding into a Hilbert space with bounds $\rho_{\pm}$.
\end{theorem}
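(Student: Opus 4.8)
The plan is to manufacture a single coarse embedding as an ultralimit of the given finite partial embeddings $f_A$. Fix a base point $x_0\in X$ and let $I$ be the collection of finite subsets $A\subseteq X$ with $x_0\in A$, directed by inclusion. Since each $f_A$ is determined only up to an ambient isometry of $H$, I would first \emph{translate} each $f_A$ so that $f_A(x_0)=0$; this leaves the hypothesis $\rho_-(d(x,y))\le\|f_A(x)-f_A(y)\|\le\rho_+(d(x,y))$ untouched. The decisive gain is a \emph{uniform} norm bound: for every $x\in A$ we now have $\|f_A(x)\|=\|f_A(x)-f_A(x_0)\|\le\rho_+(d(x,x_0))$, a bound depending on $x$ alone and not on $A$.

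Next I would fix an ultrafilter $\omega$ on $I$ containing every tail set $\{B\in I:B\supseteq A\}$ (these generate a proper filter, which extends to an ultrafilter). This guarantees that for each $x\in X$ the set $\{A:x\in A\}$, and for each pair the set $\{A:x,y\in A\}$, belongs to $\omega$. I then form the Banach-space ultraproduct $\mathcal{H}_\omega=\ell^\infty(I,H)/\{(\xi_A):\lim_\omega\|\xi_A\|=0\}$. Because the parallelogram law passes to ultralimits of norms, $\mathcal{H}_\omega$ is again a Hilbert space, with inner product $\langle[\xi_A],[\eta_A]\rangle=\lim_\omega\langle\xi_A,\eta_A\rangle$. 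Using the uniform bound above, the family $(g_A(x))_A$ given by $g_A(x)=f_A(x)$ when $x\in A$ and $g_A(x)=0$ otherwise is bounded, so $f(x):=[(g_A(x))_A]$ is a genuine element of $\mathcal{H}_\omega$.

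The distance computation then finishes the argument. For fixed $x,y$ the set of $A$ containing both is $\omega$-large, and on it $g_A(x)-g_A(y)=f_A(x)-f_A(y)$ obeys the two-sided estimate; passing to the ultralimit yields $\rho_-(d(x,y))\le\|f(x)-f(y)\|_{\mathcal{H}_\omega}\le\rho_+(d(x,y))$, which is exactly a coarse embedding with bounds $\rho_\pm$. Should a separable target be wanted, I would replace $\mathcal{H}_\omega$ by the closed linear span of $f(X)$.

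The main obstacle is securing the uniform boundedness that the ultraproduct construction demands: the hypothesis controls only the \emph{differences} $\|f_A(x)-f_A(y)\|$, and without normalization the vectors $f_A(x)$ may escape to infinity as $A$ grows, so that $\ell^\infty(I,H)$ never sees them. The basepoint translation is precisely what tames this. (An alternative route avoiding ultraproducts is to encode each $f_A$ through its conditionally negative definite kernel $\psi_A(x,y)=\|f_A(x)-f_A(y)\|^2$, run a Tychonoff compactness argument inside $\prod_{(x,y)}[\,\rho_-(d(x,y))^2,\rho_+(d(x,y))^2\,]$ over the downward-directed family of closed sets of kernels that are conditionally negative definite on a prescribed finite subset, and then recover the global embedding from the limiting kernel by Schoenberg's theorem.)
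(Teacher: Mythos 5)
This theorem is not proved in the paper at all --- it is imported by citation from \cite{DGLY-uniformembed} --- so there is no in-paper argument to compare with; judged on its own merits, your ultraproduct proof is correct. The three points that need checking all hold: the basepoint normalization $f_A\mapsto f_A-f_A(x_0)$ preserves the hypothesis and yields the uniform bound $\|f_A(x)\|\le\rho_+(d(x,x_0))$ needed for $(g_A(x))_A$ to lie in $\ell^\infty(I,H)$; the tail sets $\{B\in I: B\supseteq A\}$ have the finite intersection property because $I$ is directed under unions, so the required ultrafilter $\omega$ exists; and for fixed $x,y$ the two-sided estimate holds on the $\omega$-large set $\{A\in I: x,y\in A\}$, hence survives the $\omega$-limit. (Completeness of $\mathcal{H}_\omega$, which you need in addition to the parallelogram law, is automatic: the null sequences form a closed subspace of $\ell^\infty(I,H)$ and the quotient norm coincides with the ultralimit norm.) It is worth noting that your route runs exactly parallel to the machinery this paper develops for its own main theorems: Definition \ref{UltraproductHilbert} (ultraproducts of Hilbert spaces) and Lemma \ref{LocalGlobal} perform precisely this kind of local-to-global passage, with $r$-local affine actions in place of partial embeddings, so your argument fits the authors' toolkit even though they chose to quote the result rather than prove it. Your parenthetical kernel/Schoenberg variant is closer to the classical treatment of such finite-determination results; the only correction needed there is that, since $\rho_-$ is allowed to take negative values, the lower constraint on the kernel should read $\max\{\rho_-(d(x,y)),0\}^2$ rather than $\rho_-(d(x,y))^2$, and each closed set in your downward-directed family must only impose conditional negative definiteness on the prescribed finite subset (with unconstrained in-range values elsewhere) so that it is visibly nonempty.
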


We now consider when a metric space admits a u.b. fibred coarse embedding into Hilbert space. Let us first recall some notions.

\begin{definition}[\cite{NowakYu}]
Let $(X_n)_{n\in \mathbb{N}}$ be a sequence of bounded metric spaces, a \textit{coarse disjoint union} of $(X_n)_{n\in \mathbb{N}}$ is the disjoint union $X=\bigsqcup_{n\in \mathbb{N}}X_n$ equipped with a metric $d$ such that:
\begin{itemize}
\item the restriction of $d$ to each $X_n$ is the original metric on $X_n$; 
\item $d(X_i,X_j)\geq i+j$ if $i\not= j$.
\end{itemize}
Note that any two metrics on $X$ satisfying the above two conditions are coarsely equivalent.
\end{definition}

\begin{definition}
A metric space $\Tilde{X}$ is called a \textit{Galois covering} of a metric space $X$ if there exists a discrete group $\Gamma$ acting on $\Tilde{X}$ freely and properly by isometries such that $X=\Tilde{X}/\Gamma$. Denote by $\pi: \Tilde{X}\rightarrow X$ the associated covering map.   
\end{definition}

\begin{definition}[\cite{WillettYuExpander}] \label{asym-faithful}
A sequence of Galois coverings $(\Tilde{X}_n)_{n\in \mathbb{N}}$ of $(X_n)_{n\in \mathbb{N}}$ is said to be \textit{asymptotically faithful} if for any $r>0$ there exists $N\in \mathbb{N}$ such that the covering map $\pi_{n}:\Tilde{X}_n \rightarrow X_n$ is ``$r$-isometric" (i.e. for any subset $\Tilde{C}\subset \Tilde{X}_n$ of diameter less than $r$, the restriction of $\pi_n$ to $\Tilde{C}$ is an isometric map) for all $n\geq N$.  
\end{definition}

\begin{definition}\label{EquivariantCoarse}
Let $L$ be a positive constant, $(\Tilde{X}_n)_{n\in \mathbb{N}}$ be a sequence of Galois coverings of $(X_n)_{n\in \mathbb{N}}$ with $X_n=\Tilde{X}_n/\Gamma_n$. Say $(\Tilde{X}_n)_{n\in \mathbb{N}}$ admits an \textit{$L$-controlled uniform equivariant coarse embedding into Hilbert space}, if there exists a map $f_n:\tilde{X}_n\rightarrow H$ for each $n\in \mathbb{N}$ and two non-decreasing functions $\rho_{-}$ and $\rho_{+}$ from $[0,\infty)$ to $(-\infty,\infty)$ with $\lim_{t\rightarrow \infty}\rho_{\pm}(t)=\infty $ such that
\begin{itemize}
\item for each $n\in \mathbb{N}$, there exists an affine action $\alpha^{(n)}: \Gamma_n \rightarrow$ Affine($H$) satisfying $||\alpha^{(n)}_{\gamma}||\leq L$ and $f_n(\gamma\Tilde{x})=\alpha^{(n)}_{\gamma}(f_n(\Tilde{x}))$ for any $\gamma \in \Gamma_n$, $\Tilde{x}\in \Tilde{X}_n$;  
\item $\rho_{-}(d(\Tilde{x},\Tilde{y}))\leq ||f_n(\Tilde{x})-f_n(\Tilde{y})|| \leq \rho_{+}(d(\Tilde{x},\Tilde{y}))$ for any $\Tilde{x},\Tilde{y}\in \Tilde{X}_n$, $n\in \mathbb{N}$.		
\end{itemize}
\end{definition}

\begin{theorem}\label{EquivariantAndFibred}
Let $X=\bigsqcup_{n\in \mathbb{N}}X_n$ be a coarse disjoint union of a sequence of bounded metric spaces, if there exists a sequence of asymptotically faithful Galois coverings $(\Tilde{X}_n)_{n\in \mathbb{N}}$ which admits an $L$-controlled uniform equivariant coarse embedding into Hilbert space, then $X$ admits a u.b. fibred coarse embedding into Hilbert space.
\end{theorem}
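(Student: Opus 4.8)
The plan is to manufacture the fibred embedding data directly out of the equivariant maps $f_n$ and the deck actions $\alpha^{(n)}$, using asymptotic faithfulness only to lift small subsets of the base isometrically into the coverings. Write $\alpha^{(n)}_\gamma(v)=T^{(n)}_\gamma(v)+b^{(n)}_\gamma$ with $T^{(n)}_\gamma$ the linear part; since $\gamma\mapsto\alpha^{(n)}_\gamma$ is a homomorphism into Affine($H$), the map $\gamma\mapsto T^{(n)}_\gamma$ is a uniformly bounded linear representation of $\Gamma_n$ with $\|T^{(n)}_\gamma\|=\|\alpha^{(n)}_\gamma\|\le L$, so that $(T^{(n)}_\gamma)^{-1}=T^{(n)}_{\gamma^{-1}}$ and $T^{(n)}_{\gamma_1}T^{(n)}_{\gamma_2}=T^{(n)}_{\gamma_1\gamma_2}$. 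I would then take, all independent of $r$, the constant field $H_x:=H$ for every $x\in X$, the zero section $s(x):=0$, and the functions $\rho_\pm$ supplied by the hypothesis; for every point $x\in X_n$ I fix once and for all a reference lift $\Tilde{x}^{\mathrm{ref}}\in\pi_n^{-1}(x)$.

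Given $r>0$, I would first invoke asymptotic faithfulness at scale $2r$ to obtain $N$ with $\pi_n$ being $2r$-isometric for all $n\ge N$, and then set $K:=\bigsqcup_{n\le M}X_n$ where $M:=\max\{N,\lceil r/2\rceil\}$, which is a bounded subset of $X$. If $C\subset X\setminus K$ has diameter less than $r$, then, because $d(X_i,X_j)\ge i+j$ in a coarse disjoint union, $C$ must lie inside a single component $X_n$ with $n>M\ge N$; as $\pi_n$ is $2r$-isometric, $C$ (and the union of two overlapping such sets, of diameter less than $2r$) lifts isometrically. Concretely, I choose the lift $\Tilde{C}\subset\Tilde{X}_n$ of $C$ through a chosen basepoint lift, of diameter less than $r$ and with $\pi_n|_{\Tilde{C}}$ an isometric bijection onto $C$, write $\Tilde{x}\in\Tilde{C}$ for the lift of $x\in C$, let $g(x)\in\Gamma_n$ be the unique deck element with $\Tilde{x}=g(x)\,\Tilde{x}^{\mathrm{ref}}$ (unique by freeness), and define
$$t_C(x)\colon H_x=H\longrightarrow H,\qquad t_C(x)(v):=T^{(n)}_{g(x)}(v)+f_n(\Tilde{x}).$$
This is an affine isomorphism, and $\|t_C(x)\|=\|T^{(n)}_{g(x)}\|\le L$ while $\|t_C(x)^{-1}\|=\|T^{(n)}_{g(x)^{-1}}\|\le L$, so the trivializations are $L$-controlled.

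Condition (1) is then immediate: $t_C(x)(s(x))=f_n(\Tilde{x})$, so $\|t_C(x)(s(x))-t_C(y)(s(y))\|=\|f_n(\Tilde{x})-f_n(\Tilde{y})\|$, and because the lift is isometric we have $d(\Tilde{x},\Tilde{y})=d(x,y)$, whence the $L$-controlled equivariant embedding bounds yield exactly $\rho_-(d(x,y))\le\|f_n(\Tilde{x})-f_n(\Tilde{y})\|\le\rho_+(d(x,y))$. For condition (2), let $C_1,C_2\subset X_n$ overlap, let $x\in C_1\cap C_2$, and let $\gamma\in\Gamma_n$ be the deck element with $\Tilde{x}^{(1)}=\gamma\,\Tilde{x}^{(2)}$, where $\Tilde{x}^{(i)}\in\Tilde{C}_i$. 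Then $g_1(x)g_2(x)^{-1}=\gamma$, so using $T^{(n)}_{g_1(x)}(T^{(n)}_{g_2(x)})^{-1}=T^{(n)}_{\gamma}$ together with the equivariance $f_n(\Tilde{x}^{(1)})=\alpha^{(n)}_\gamma f_n(\Tilde{x}^{(2)})$, a direct computation collapses $t_{C_1}(x)\,t_{C_2}(x)^{-1}$ to $\alpha^{(n)}_\gamma$, an affine isomorphism with $\max\{\|\alpha^{(n)}_\gamma\|,\|(\alpha^{(n)}_\gamma)^{-1}\|\}\le L$; setting $t_{C_1C_2}:=\alpha^{(n)}_\gamma$ delivers condition (2), \emph{provided} $\gamma$ does not depend on $x$.

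The one genuine obstacle is exactly this last clause. I would prove the connecting deck element $\gamma$ is locally constant as follows: the sets $\{\Tilde{x}^{(1)}:x\in C_1\cap C_2\}$ and $\gamma\cdot\{\Tilde{x}^{(2)}:x\in C_1\cap C_2\}$ are two isometric lifts of $C_1\cap C_2$ that agree over a chosen basepoint, and both lie inside $\Tilde{C}_1\cup\gamma\Tilde{C}_2$, a set of diameter less than $2r$ on which $\pi_n$ is injective; hence the two lifts coincide, forcing $\Tilde{x}^{(1)}=\gamma\,\Tilde{x}^{(2)}$ with the same $\gamma$ for every $x\in C_1\cap C_2$. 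This is where asymptotic faithfulness (isometric lifting at scale $2r$) and the freeness of the Galois action are both indispensable, and the design of the linear parts $T^{(n)}_{g(x)}$ of $t_C(x)$ — engineered precisely so that the $x$-dependent vectors $f_n(\Tilde{x})$ cancel against the equivariance of $f_n$ — is what converts this geometric rigidity into the algebraic statement that $t_{C_1}(x)t_{C_2}(x)^{-1}$ is a constant element of Affine($H$). Everything else is bookkeeping, and the resulting u.b. fibred coarse embedding has control constant $L$ (one may enlarge $L$ slightly if strict inequalities are preferred in Definition \ref{ubfibred}).
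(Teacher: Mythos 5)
Your proof is correct, and it packages the construction genuinely differently from the paper. The paper first renorms $H$ by $\|h\|_{\alpha^{(n)}}=\sup_{\gamma\in\Gamma_n}\|\alpha^{(n)}_\gamma(h)-\alpha^{(n)}_\gamma(0)\|$ so that the uniformly bounded action becomes \emph{isometric}, then builds a non-constant field of fibres as associated-bundle quotients $H_x=(\pi_n^{-1}(x)\times H_{\alpha^{(n)}})/\Gamma_n$, puts the embedding into the section $s(x)=[(\tilde{x},f_n(\tilde{x}))]$, and trivializes over components of $\pi_n^{-1}(C)$ by $[(\tilde{x},h)]\mapsto\alpha^{(n)}_\gamma(h)$, so that the transition maps are deck transformations $\alpha^{(n)}_\gamma$. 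You instead keep the original Hilbert norm, take the constant field $H_x=H$ with zero section, and absorb both the linear parts $T^{(n)}_{g(x)}$ and the vectors $f_n(\tilde{x})$ into the trivializations, letting equivariance of $f_n$ collapse $t_{C_1}(x)t_{C_2}(x)^{-1}$ to $\alpha^{(n)}_\gamma$ --- the same transition maps, reached by a direct computation rather than by the bundle structure. Your version buys two things: the fibres are honest Hilbert spaces (in the paper each fibre carries the norm $\|\cdot\|_{\alpha^{(n)}}$, which is only \emph{equivalent} to a Hilbert norm, a small mismatch with Definition \ref{ubfibred} that the paper has to wave at), and no renorming or quotient construction is needed. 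The paper's version buys that, after renorming, every structure map is isometric, so the argument is verbatim the Chen--Wang--Yu flat-bundle template with the constant $L$ appearing only when translating back to the original norm. Note also that the constancy in $x$ of the connecting element $\gamma$ --- which you rightly isolate as the one real obstacle and prove via $2r$-isometricity and injectivity of $\pi_n$ on $\tilde{C}_1\cup\gamma_0\tilde{C}_2$ --- is asserted without proof in the paper (``the unique element $\gamma$ with $\gamma\tilde{C}_{1,z_1}\cap\tilde{C}_{2,z_2}\neq\emptyset$''), so your argument supplies a detail the paper leaves implicit; conversely, both you and the paper implicitly use that a set $C$ of diameter less than $r$ admits an isometric lift through a chosen basepoint, which rests on the quotient-metric identity $d(x,y)=\min_{\gamma\in\Gamma_n}d(\tilde{x},\gamma\tilde{y})$ for the proper free isometric deck action before asymptotic faithfulness can be invoked.
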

\begin{proof}
For each $n\in \mathbb{N}$, by Definition \ref{EquivariantCoarse}, there exists an affine action $\alpha^{(n)}$ of $\Gamma_n$ on $H$ with $||\alpha^{(n)}_{\gamma}||\leq L$ for any $\gamma \in \Gamma_n$. For each $h\in H$, define 
    $$||h||_{\alpha^{(n)}}=\sup_{\gamma\in \Gamma_n}||\alpha^{(n)}_{\gamma}(h)-\alpha^{(n)}_{\gamma}(0)||.$$ 
Then $L^{-1}||h||\leq ||h||_{\alpha^{(n)}}\leq L||h||$, hence $||\cdot||_{\alpha^{(n)}}$ is a norm on $H$ which is equivalent to the original norm. Denote the Banach space $H$ equipped with the norm $||\cdot||_{\alpha^{(n)}}$ by $H_{\alpha^{(n)}}$, then the action $\alpha^{(n)}$ of $\Gamma_n$ on $H$ naturally induces an isometric affine action of $\Gamma_n$ on $H_{\alpha^{(n)}}$, also denoted by $\alpha^{(n)}$. \par
For each $n\in \mathbb{N}$, the group $\Gamma_n$ acts on $\Tilde{X}_n \times H_{\alpha^{(n)}}$ by $\gamma \cdot(\Tilde{x},h)=(\gamma \Tilde{x},\alpha^{(n)}_{\gamma}(h))$ for any $\Tilde{x}\in \Tilde{X}_n, h\in H_{\alpha^{(n)}}$ and $\gamma\in \Gamma_n$. For each $x\in X_n$, define
      $$H_x=(\pi^{-1}_n(x)\times H_{\alpha^{(n)}})/\Gamma_n$$
where $\pi^{-1}_n(x)$ is the preimage of $x$ by the covering map $\pi_n: \Tilde{X}_n\rightarrow X_n$. Fixing a point $z_x\in \pi^{-1}_n(x)$, define a norm on $H_x$ by 
      $$||[(\Tilde{x},h)]||_{z_x}=||\alpha^{(n)}_{\gamma}(h)||_{\alpha^{(n)}}$$
where $[(\Tilde{x},h)]$ is the orbit of $(\Tilde{x},h)\in (\pi^{-1}_n(x)\times H_{\alpha^{(n)}})$ and $\gamma\in \Gamma_n$ with $z_x=\gamma\Tilde{x}$. This norm induces the following metric on $H_x$
      $$d_x([(\Tilde{x},h)],[(\Tilde{x}',h')])=||\alpha^{(n)}_{\gamma_1}(h)-\alpha^{(n)}_{\gamma_2}(h')||_{\alpha^{(n)}}$$
where $\gamma_1,\gamma_2\in \Gamma_n$ with $z_x=\gamma_1\Tilde{x}$ and $z_x=\gamma_2\Tilde{x}'$. Note that $d_x$ is independent of $z_x$ since the action $\alpha^{(n)}$ is an isometric affine action on $H_{\alpha^{(n)}}$. Thus
      $$(H_x)_{x\in X_n}=(\Tilde{X}_n\times H_{\alpha^{(n)}})/\Gamma_n$$
is a field of Banach spaces over $X_n$ on which the norm of each fiber is equivalent to a Hilbert space norm. \par
For each $n\in \mathbb{N}$, define a section 
      $$s:X_n\rightarrow (\Tilde{X}_n\times H_{\alpha^{(n)}})/\Gamma_n$$ 
by
      $$s(x)=[(\Tilde{x},f_n(\Tilde{x}))]$$
for all $x\in X_n$, where $\Tilde{x}\in \pi^{-1}_n(x)$ and $f_n$ is an $L$-controlled uniform equivariant coarse embedded map in Definition \ref{EquivariantCoarse}. This is well defined since $f_n$ is an equivariant map. \par
For any $r>0$,  $X=\bigsqcup_{n\in \mathbb{N}}{X_n}$ is a coarse disjoint union and $(\Tilde{X}_n)_{n\in \mathbb{N}}$ is a sequence of asymptotically faithful Galois coverings, and thus there exists a positive integer $N$ such that 
\begin{enumerate}
\item $d(X_i,X_j)>r$ for any $i,j>N$;
\item the covering map $\pi_{n}:\Tilde{X}_n \rightarrow X_n$ is ``$r$-isometric" for any $n>N$.
\end{enumerate}
Let $K=\bigsqcup_{n\leq N}X_n$, which is a bounded subset in $X$ since every $X_n$ is a bounded metric space. Then for any $C\subset X\setminus K$ of diameter less than $r$, $C$ belongs to $X_n$ for only one integer $n>N$. For a point $z\in \pi^{-1}_n(C)$, let $\Tilde{C}_z$ denote the component of $\pi^{-1}_n(C)$ containg $z$, then each such component $\Tilde{C}_z$ gives rise to a trivialization:  
      $$t_{C,z}: (H_x)_{x\in C}=(\Tilde{C}\times H_{\alpha^{(n)}})/ \Gamma_n \rightarrow C\times H_{\alpha^{(n)}}$$
i.e. for each $x\in C$, define 
      $$t_{C,z}(x):H_x= (\pi^{-1}_n(x)\times H_{\alpha^{(n)}})/\Gamma_n\rightarrow H_{\alpha^{(n)}}$$ 
by
      $$t_{C,z}(x)([(\Tilde{x},h)])=\alpha^{(n)}_\gamma(h) $$
where $[(\Tilde{x},h)]$ is the orbit of $(\Tilde{x},h)\in (\pi^{-1}_n(x)\times H_{\alpha^{(n)}})$ and $\gamma$ is the unique element in $\Gamma_n$ satisfying $\gamma \Tilde{x}\in \Tilde{C}_z$. Obviously, $t_{C,z}(x)$ is an isometric affine isomorphism on $H_{\alpha^{(n)}}$, this implies that the map $t_{C,z}(x):(\pi^{-1}_n(x)\times H)/\Gamma_n\rightarrow H$ is an affine isomorphism with $\max\{||t_{C,z}(x)||,||t_{C,z}(x)^{-1}||\}\leq L$ for any $x\in C$.  \par
Now for any two elements $x,y\in C$, there exist two elements $\gamma_1,\gamma_2\in \Gamma_n$ such that $\gamma_1\Tilde{x}\in \Tilde{C}_z$ and $\gamma_2\Tilde{y}\in \Tilde{C}_z$, so that 
      $$||t_{C,z}(x)(s(x))-t_{C,z}(y)(s(y))||=||f_n(\gamma_1\Tilde{x})-f_n(\gamma_2\Tilde{y})||$$ 
belongs to $[\rho_-(d(x,y)),\rho_+(d(x,y))]$, where $||\cdot||$ is the original norm on the Hilbert space $H$ and the two functions $\rho_{\pm}$ come from Definition \ref{EquivariantCoarse}.\par
Moreover, for any two subsets $C_1,C_2\subset X\setminus K$ of diameter less than $r$ with $C_1 \cap C_2 \neq \emptyset$, there exists a unique integer $n>N$ such that $C_1,C_2\in X_n$. Then for any $x\in C_1\cap C_2$, we have 
      $$t_{C_1,z_1}(x)t_{C_2,z_2}(x)^{-1}=\alpha^{(n)}_\gamma$$
where $\gamma$ is the unique element in $\Gamma_n$ satisfying $\gamma \Tilde{C}_{1,z_1}\cap \Tilde{C}_{2,z_2}\not= \emptyset$. It is an affine isometry on $H_{\alpha^{(n)}}$, and hence is an affine isomorphism on $H$ satisfying $\max\{||\alpha^{(n)}_\gamma||,||(\alpha^{(n)}_\gamma)^{-1}||\}\leq L$. This completes the proof.
\end{proof}

\begin{remark}
In particular, if $L=1$ in the assumption of the above theorem, then we can obtain that $X$ admits a fibred coarse embedding into Hilbert space. 
\end{remark}

\begin{remark}
The above proof also holds for general Banach spaces. Thus the above theorem is true for all Banach spaces. 
\end{remark}

\section{main results}
In this section, we will characterize u.b. a-T-menability of groups by u.b. fibred coarse embeddability of box spaces (Definition \ref{DefBoxSpace}) and warped cones (Definition \ref{DefWarpedCones}), respectively, i.e. Theorem \ref{MainTheorem} and Theorem \ref{MainTheorem2}, the proofs of which are based on the ultraproduct machinery developed by S. Arnt \cite{ArntFibredLp}. Before doing that, we need to give some preliminaries.

We first recall the concept of \textit{ultraproducts} for a family of Hilbert spaces. 
\begin{definition}
A \textit{non-principal ultrafilter} on $\mathbb{N}$ is a collection $\mathcal{U}$ of subsets of $\mathbb{N}$ that satisfies the following conditions:
\begin{itemize}
\item the empty set $\emptyset$ does not belong to $\mathcal{U}$;
\item if $A,B\in \mathcal{U}$, then $A\cap B\in \mathcal{U}$;
\item if $A\in \mathcal{U}$ and $A\subset B\subset \mathbb{N}$, then $B\in \mathcal{U}$;
\item if $A\subset \mathbb{N}$, then either $A\in \mathcal{U}$ or $\mathbb{N}\setminus A\in \mathcal{U}$;
\item all the finite subsets of $\mathbb{N}$ do not belong to $\mathcal{U}$.
\end{itemize}  
\end{definition}

\begin{definition}
Let $\mathcal{U}$ be a non-principal ultrafilter on $\mathbb{N}$ and $\{x_n\}_{n\in \mathbb{N}}$ be a bounded sequence of real numbers. The \textit{$\mathcal{U}-limit$} of the sequence, denoted by $\lim_{\mathcal{U}}x_n$, is a real number $x$ satisfying 
      $$\{n\in\mathbb{N}:|x_n-x|\leq \epsilon\}\in \mathcal{U}$$
for any $\epsilon>0$. 
\end{definition}
Note that for any bounded sequence of real numbers, its $\mathcal{U}-limit$ exists and is unique (see Chapter 11 of \cite{SetTheory}). Moreover, let $\{x_n\}_{n\in \mathbb{N}}$ and $\{y_n\}_{n\in \mathbb{N}}$ be two bounded sequences of real numbers, then $\lim_{\mathcal{U}}(x_n+y_n)=\lim_{\mathcal{U}}x_n+\lim_{\mathcal{U}}y_n$ and if $x_n\leq y_n$ for all but finitely many $n\in \mathbb{N}$, then $\lim_{\mathcal{U}}x_n\leq \lim_{\mathcal{U}}y_n$.

\begin{definition} \label{UltraproductHilbert}
Let $(H_n)_{n\in \mathbb{N}}$ be a family of Hilbert spaces and $\mathcal{U}$ be a non-pricipal ultrafilter on $\mathbb{N}$, consider the space
      $$\ell^{\infty}(\mathbb{N},(H_n)_{n\in \mathbb{N}})=\{(x_n)\in \prod_{n\in\mathbb{N}}H_n: \sup_{n\in \mathbb{N}}||x_n||<\infty\}$$
and 
      $$C_0^{\mathcal{U}}(\mathbb{N},(H_n)_{n\in \mathbb{N}})=\{(x_n)\in \ell^{\infty}(\mathbb{N},(H_n)_{n\in \mathbb{N}}): \lim_{\mathcal{U}}||x_n||=0\}.$$
The \textit{ultraproduct} $H_{\mathcal{U}}$ of the family $(H_n)_{n\in \mathbb{N}}$ with respect to the non-principal ultrafilter $\mathcal{U}$ is defined to be the closure of quotient space $\ell^{\infty}(\mathbb{N},(H_n)_{n\in \mathbb{N}})/C_0^{\mathcal{U}}(\mathbb{N},(H_n)_{n\in \mathbb{N}})$ equipped with the inner product 
      $$\langle[x],[y]\rangle_{\mathcal{U}}=\lim_{\mathcal{U}}(\mathrm{Re}\langle x_n,y_n\rangle)+i\lim_{\mathcal{U}}(\mathrm{Im}\langle x_n,y_n\rangle),$$
where $x=(x_n)_{n\in \mathbb{N}},y=(y_n)_{n\in \mathbb{N}}$. 
\end{definition}

We next consider the concept of \textit{r-local affine action} (see \cite{ArntFibredLp}), then we construct an affine action on the ultraproduct of a family of Hilbert spaces from a family of $r$-local affine actions.

\begin{definition}\label{r-local}
Let $r>0$ and $\Gamma$ be a finitely generated group equipped with a word length function $|\cdot|$. A map $\alpha: \Gamma \times H\rightarrow H$ is called an \textit{$r$-local affine action} of $\Gamma$ on $H$, if for any $\gamma \in \Gamma$ and $h\in H$, 
      $$\alpha_{\gamma}(h)=T_{\gamma}(h)+b_{\gamma}$$
where $T: \Gamma \times H \rightarrow H$ and $b: \Gamma \rightarrow H$ are two maps satisfying the following conditions:
\begin{itemize}
\item $T_{\gamma}:H\rightarrow H$ is a linear bijection for all $\gamma\in \Gamma$ with $|\gamma|\leq r$;
\item for all $\gamma_1,\gamma_2\in \Gamma$ with $\max\{|\gamma_1|,|\gamma_2|,|\gamma_1\gamma_2|\}\leq r$, $T_{\gamma_1\gamma_2}=T_{\gamma_1}T_{\gamma_2}$;
\item for all $\gamma_1,\gamma_2\in \Gamma$ with $\max\{|\gamma_1|,|\gamma_2|,|\gamma_1\gamma_2|\}\leq r$, $b_{\gamma_1\gamma_2}=T_{\gamma_1}(b_{\gamma_2})+b_{\gamma_1}.$
\end{itemize}
In addition, if there exists a constant $L>0$ such that $||T_{\gamma}||\leq L$ for all $\gamma\in \Gamma$ with $|\gamma|\leq r$, we call $\alpha$ is an \textit{r-local uniformly bounded affine action}.
\end{definition}

The following lemma plays a crucial role in the proof of our main result.
\begin{lemma}\label{LocalGlobal}
Let $\Gamma$ be a finitely generated group, $(H_n)_{n\in \mathbb{N}}$ be a family of Hilbert spaces and $H_{\mathcal{U}}$ be the ultraproduct of the family $(H_n)_{n\in \mathbb{N}}$ with respect to a non-principal ultrafilter $\mathcal{U}$ on $\mathbb{N}$. If for each $n\in \mathbb{N}$, $\Gamma$ admits an $n$-local uniformly bounded affine action $\alpha^{(n)}$ on $H_n$ with 
      $$\alpha^{(n)}_{\gamma}\cdot=T^{(n)}_{\gamma}\cdot+b^{(n)}_{\gamma}$$
where $\sup_{|\gamma|\leq n,n\in\mathbb{N}}||T^{(n)}_{\gamma}||\leq L$ for some constant $L$ and $\sup_{n\in\mathbb{N}}||b^{(n)}_{\gamma}||<\infty$ for any $\gamma\in \Gamma$, then there exists a uniformly bounded affine action $\alpha$ of $\Gamma$ on $H_{\mathcal{U}}$ such that
      $$\alpha_{\gamma}\cdot=T_{\gamma}\cdot+b_{\gamma}$$
where $T:\Gamma \times H_{\mathcal{U}} \rightarrow H_{\mathcal{U}}$ is a uniformly bounded representation of $\Gamma$ on $H_{\mathcal{U}}$ and $b: \Gamma \rightarrow H_{\mathcal{U}}$ is a map such that
\begin{center}
$T_{\gamma}(h)=(T^{(n)}_{\gamma}(h_n))_{n\in \mathbb{N}}$ and $b_{\gamma}=(b^{(n)}_{\gamma})_{n\in \mathbb{N}}$ 
\end{center}
for $\gamma\in \Gamma$ and $h=[(h_n)_{n\in \mathbb{N}}]\in H_{\mathcal{U}}$.
\end{lemma}
\begin{proof}
We first show that $(T_{\gamma})_{\gamma\in \Gamma}$ is a family of uniformly bounded linear operators on $H_{\mathcal{U}}$. For each $\gamma\in \Gamma$ and any $c,c'\in \mathbb{C}$, $h=[(h_n)_{n\in \mathbb{N}}],h'=[(h'_n)_{n\in \mathbb{N}}]\in H_{\mathcal{U}}$, by Definition \ref{r-local}, we have 
      $$T^{(n)}_{\gamma}(ch_n+c'h'_n)=cT^{(n)}_{\gamma}(h_n)+c'T^{(n)}_{\gamma}(h'_n)$$
for all but finitely many $n\in \mathbb{N}$ which implies that $T_{\gamma}$ is a linear operator. Moreover, for each $\gamma\in \Gamma$, 
      $$||T_{\gamma}(h)||_{\mathcal{U}}=\lim_{\mathcal{U}}||T^{(n)}_{\gamma}(h_n)||\leq L\lim_{\mathcal{U}}||h_n||=L||h||_{\mathcal{U}}$$
which implies that $(T_{\gamma})_{\gamma\in \Gamma}$ is a family of uniformly bounded operators.\par
Similarly, we can show that the action $\alpha$ of $\Gamma$ on $H_{\mathcal{U}}$ is an affine action. 
\end{proof}

\begin{remark}
Actually, we can define ultraproducts for a family of Banach spaces just like the case of Hilbert spaces (Definition \ref{UltraproductHilbert}), then the above lemma holds for  more general Banach spaces which are closed under ultraproducts, such as $L^p$-spaces for $1\leq p <\infty$ and uniformly convex Banach spaces.
\end{remark}

We now state and prove our main theorem for box spaces.
\begin{definition}\label{DefBoxSpace}
A finitely generated group $\Gamma$ is called \textit{residually finite} if there exists a sequence of finite index normal subgroups $\Gamma=\Gamma_0\unrhd \Gamma_1\unrhd \cdots \unrhd \Gamma_n \unrhd \cdots$ such that $\bigcap_{i\in\mathbb{N}}\Gamma_i=\{e\}$, where $e$ is the unit of $\Gamma$. The \textit{box space} associated with the sequence $\{\Gamma_i\}$, denoted by $\square_{\{\Gamma_i\}}\Gamma$ or simply $\square\Gamma$, is the coarse disjoint union $\bigsqcup_{i\in \mathbb{N}}(\Gamma/\Gamma_i)$, where the metric on $\Gamma/\Gamma_i$ is the quotient metric induced from the word length metric on $\Gamma$ for each $i\in \mathbb{N}$.
\end{definition}

\begin{theorem}\label{MainTheorem}
Let $\Gamma$ be a finitely generated residually finite group with a sequence of finite index normal subgroups $\Gamma=\Gamma_0\unrhd \Gamma_1\unrhd \cdots \unrhd \Gamma_n \unrhd \cdots$ such that $\bigcap_{i\in\mathbb{N}}\Gamma_i=\{e\}$, $\square\Gamma$ be the box space associated with this sequence. Then $\Gamma$ is u.b. a-T-menable if and only if $\square\Gamma$ admits a u.b. fibred coarse embedding into Hilbert space.
\end{theorem}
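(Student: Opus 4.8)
The plan is to prove the two implications separately, using the covering structure of the box space as the bridge between the group and the space. Each component $X_i=\Gamma/\Gamma_i$ is finite, hence a bounded metric space, and the quotient map $\pi_i\colon\Gamma\to\Gamma/\Gamma_i$ exhibits the Cayley graph of $\Gamma$ as a Galois covering of $X_i$ with deck group $\Gamma_i$. Since the $\Gamma_i$ are nested with $\bigcap_i\Gamma_i=\{e\}$, for every $r>0$ there is an $N$ with $\Gamma_i\cap B(e,r)=\{e\}$ for $i\ge N$, so the constant family $(\Gamma)_i$ is an asymptotically faithful sequence of Galois coverings of $(X_i)_i$ in the sense of Definition \ref{asym-faithful}. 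It is this identification that lets me move between local data on $\square\Gamma$ and actions of the whole group $\Gamma$.

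For the forward implication I would argue directly. Suppose $\Gamma$ is u.b. a-T-menable, with a u.b. metrically proper affine action $\alpha$ on $H$, written $\alpha_\gamma=T_\gamma\cdot+b_\gamma$ with $\sup_\gamma\|T_\gamma\|\le L$. By (the proof of) Lemma \ref{ActionImplyEmbed} the map $f=b_\bullet\colon\Gamma\to H$ is an equivariant coarse embedding with some bounds $\rho_\pm$. Restricting $\alpha$ to each $\Gamma_i$ and taking $f_i=f$ for every $i$ furnishes, for the covering family above, an $L$-controlled uniform equivariant coarse embedding in the sense of Definition \ref{EquivariantCoarse}: the equivariance $f(\gamma\tilde x)=\alpha_\gamma f(\tilde x)$ holds for $\gamma\in\Gamma_i$, the bound $\|\alpha^{(i)}_\gamma\|\le L$ is inherited from $\alpha$, and the distortion functions $\rho_\pm$ are independent of $i$ because every covering space $\tilde X_i=\Gamma$ carries one and the same word metric. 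Theorem \ref{EquivariantAndFibred} then delivers a u.b. fibred coarse embedding of $\square\Gamma$.

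For the converse I follow the ultraproduct strategy. Fix a u.b. fibred coarse embedding of $\square\Gamma$ with constant $L$, section $s$, and bounds $\rho_\pm$. For each $n$ I would choose $m=m(n)$ so large that $X_m$ avoids the excluded set $K$ for a scale $r\sim n$ and that $\pi_m$ is $r$-isometric, so that a ball $B(e,Cn)\subset\Gamma$ maps isometrically onto a ball about the base coset $x_0=e\Gamma_m$. Transporting the trivialisations through this isometric lift, the linear parts of the transition maps $t_{C_1C_2}$ supply candidate operators $T^{(n)}_\gamma$, while the section differences $t_{C}(\gamma x_0)(s(\gamma x_0))-t_C(x_0)(s(x_0))$ supply candidate vectors $b^{(n)}_\gamma$, for $|\gamma|\le n$. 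I would then check that $(T^{(n)},b^{(n)})$ is an $n$-local u.b. affine action as in Definition \ref{r-local}: the multiplicativity $T^{(n)}_{\gamma_1\gamma_2}=T^{(n)}_{\gamma_1}T^{(n)}_{\gamma_2}$ and the cocycle identity $b^{(n)}_{\gamma_1\gamma_2}=T^{(n)}_{\gamma_1}b^{(n)}_{\gamma_2}+b^{(n)}_{\gamma_1}$, for $|\gamma_1|,|\gamma_2|,|\gamma_1\gamma_2|\le n$, follow from the constancy of the transition maps on overlaps (condition (2) of Definition \ref{ubfibred}) together with the fact that at scale $m(n)$ the relevant short loops lift to genuine loops in $\Gamma$, so that no holonomy obstructs the identities. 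The uniform bound $\max\{\|T^{(n)}_\gamma\|,\|(T^{(n)}_\gamma)^{-1}\|\}\le L$ is inherited from the trivialisation and transition bounds, and condition (1) gives $\rho_-(|\gamma|)\le\|b^{(n)}_\gamma\|\le\rho_+(|\gamma|)$ for $|\gamma|\le n$, with the same $\rho_\pm$ for every $n$.

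With these $n$-local actions in hand, Lemma \ref{LocalGlobal} assembles them into a genuine u.b. affine action $\alpha$ of $\Gamma$ on the ultraproduct $H_{\mathcal{U}}$, with $b_\gamma=(b^{(n)}_\gamma)_n$ and uniformly bounded linear part $T_\gamma=(T^{(n)}_\gamma)_n$. Properness is then immediate from the lower estimate: for each $\gamma$ one has $\|b_\gamma\|_{\mathcal{U}}=\lim_{\mathcal{U}}\|b^{(n)}_\gamma\|\ge\lim_{\mathcal{U}}\rho_-(|\gamma|)=\rho_-(|\gamma|)$, which tends to $\infty$ as $|\gamma|\to\infty$. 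Since $\alpha_\gamma 0=b_\gamma$ and $\sup_\gamma\|T_\gamma\|\le L$, any $\gamma$ with $\alpha_\gamma(B)\cap B\neq\emptyset$ for a bounded $B$ satisfies $\|b_\gamma\|_{\mathcal{U}}\le(1+L)\sup_{h\in B}\|h\|_{\mathcal{U}}$, which bounds $|\gamma|$ and hence leaves only finitely many such $\gamma$; thus $\Gamma$ is u.b. a-T-menable. I expect the main obstacle to be the verification in the previous paragraph that the transition-map and section data satisfy the local cocycle identities \emph{exactly} for word length $\le n$. This is precisely the point at which asymptotic faithfulness is indispensable: it is what guarantees that the local combinatorics over $X_{m(n)}$ agree with those over $\Gamma$, so that the holonomy of the short loops entering the identities vanishes. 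As the remark after Lemma \ref{LocalGlobal} indicates, this ultraproduct step is insensitive to replacing $H$ by a uniformly convex Banach space, which is why the theorem persists in the $L^p$ setting.
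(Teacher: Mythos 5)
Your forward implication coincides with the paper's: residual finiteness makes the constant family $(\Gamma)_{i}$ an asymptotically faithful sequence of Galois coverings of $(\Gamma/\Gamma_i)_{i}$, and Lemma \ref{ActionImplyEmbed} together with Theorem \ref{EquivariantAndFibred} finishes that half. Your converse also follows the paper's high-level strategy ($n$-local u.b.\ affine actions, then Lemma \ref{LocalGlobal} on an ultraproduct, then the properness estimate, all of which you handle correctly), but the construction of the $n$-local actions has a genuine gap, at exactly the step you yourself flag as the main obstacle. You build the local action on the single fibre $H$: $b^{(n)}_{\gamma}$ is a section difference at one base point $x_0$, and $T^{(n)}_{\gamma}$ is the linear part of one transition map $t_{C_{x_0}C_{\gamma x_0}}$. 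This data cannot satisfy the identities of Definition \ref{r-local}. Transition maps obey only a \emph{groupoid} relation, $t_{C_{x_0}C_{\gamma_1\gamma_2 x_0}}=t_{C_{x_0}C_{\gamma_1 x_0}}\,t_{C_{\gamma_1 x_0}C_{\gamma_1\gamma_2 x_0}}$ on triple overlaps, in which the second factor is anchored at the translated point $\gamma_1 x_0$, not at $x_0$; hence $\gamma\mapsto t_{C_{x_0}C_{\gamma x_0}}$ is not multiplicative. Likewise the cocycle identity $b^{(n)}_{\gamma_1\gamma_2}=T^{(n)}_{\gamma_1}(b^{(n)}_{\gamma_2})+b^{(n)}_{\gamma_1}$ would force $T^{(n)}_{\gamma_1}$ to carry $t_{C}(x_0)(s(x_0))-t_{C}(\gamma_2 x_0)(s(\gamma_2 x_0))$ to $t_{C}(\gamma_1 x_0)(s(\gamma_1 x_0))-t_{C}(\gamma_1\gamma_2 x_0)(s(\gamma_1\gamma_2 x_0))$ for all admissible $\gamma_2$ simultaneously; since the section $s$ is arbitrary apart from the bounds $\rho_{\pm}$, nothing in Definition \ref{ubfibred} supplies such an operator. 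Asymptotic faithfulness cannot repair this: it guarantees that $r$-balls of $X_{m(n)}$ are isometric to $r$-balls of $\Gamma$ (no holonomy on the base), but it imposes no compatibility between the section and transition data at \emph{different} base points, which is where both identities break.

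The paper's remedy, the crux of Arnt's method, is to refuse to choose a base point: the $r$-local action lives on the normalized $\ell^2$-sum $\bigoplus_{z\in X_{n_r}}H$ over all $z\in X_{n_r}$, with $b_r(x)=\bigl(c^{z}_r(x)\bigr)_{z}$ where $c^{z}_r(x)=t_{C_z}(z)(s(z))-t_{C_z}(zx)(s(zx))$, and with the operator shifting the index before applying the transition, $\bigl(T_r(x)h\bigr)_{z}=t_{C_zC_{zx}}(h_{zx})$ (linear parts understood). Because of the shift, the $z$-component of $T_r(x)(b_r(y))$ is the transition image of $c^{zx}_r(y)$ --- precisely the vector based at $zx$ that the cocycle identity calls for --- and multiplicativity of $T_r$ reduces to the groupoid relation on triple overlaps. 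The normalization $|X_{n_r}|^{-1}\sum_{z}$ of the inner product keeps the bounds $\rho_{\pm}$ intact, and after lifting $T_r,b_r$ through the $r$-isometry $\pi_{n_r}$ one has an honest $r$-local u.b.\ affine action; from there your ultraproduct endgame and properness argument apply verbatim. So the repair is structural, not cosmetic: the local actions must be built on these twisted-shift spaces rather than on a single fibre.
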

\begin{proof}
We first assume that $\Gamma$ is u.b. a-T-menable. Because $\Gamma$ is residually finite, thus $\{\Gamma\}_{i\in \mathbb{N}}$ is a sequence of asymptotically faithful Galois covering of $\Gamma/\Gamma_i$, then by Theorem \ref{EquivariantAndFibred} and Lemma \ref{ActionImplyEmbed}, the box space $\square\Gamma$ admits a u.b. fibred coarse embedding into Hilbert space.\par
Now we consider the other side of this theorem. For the sake of convenience, we set $X_i=\Gamma/\Gamma_i$. By Definition \ref{ubfibred} and the residually finite property of $\Gamma$, for any integer $r>0$, there exists a positive integer $n_r$ such that (\romannumeral1) $d(X_i,X_j)>r$ for all $i,j\geq n_r$ and for each subset $C\subset X_{n_r}$ of diameter less than $r$ there exists a trivialization $t_C$ satisfying the conditions of Definition \ref{ubfibred}; (\romannumeral2) the quotient map $\pi_{n_r}:\Gamma \rightarrow X_{n_r}$ is an $r$-isometric map. \par
Fixing $n_r$ and $z\in X_{n_r}$, let $C_z=\{x\in X_{n_r}: d(z,x)\leq r\}$. For each $x\in X_{n_r}$, define a vector $c^z_r(x)$ in the Hilbert space $H$ by
$$ c^z_r(x)=\left\{
\begin{array}{lcl}
t_{C_z}(z)(s(z))-t_{C_z}(zx)(s(zx)), & & {d(e,x)\leq r}\\
0,        & & {\text{otherwise}}
\end{array} \right. $$
where $e$ is the identity element of $X_{n_r}$. Let $\rho_{\pm}$ be the control functions coming from Definition \ref{ubfibred}, then we have 
$$\rho_{-}(d(e,x))\leq ||c^z_r(x)||\leq \rho_+(d(e,x))\eqno(*)$$
for any $x\in C_e$. \par
Define the map $b_r: X_{n_r}\rightarrow (\bigoplus_{z\in X_{n_r}}H)$ by
      $$b_r(x)=(c^z_r(x))_{z\in X_{n_r}}$$
for $x\in X_{n_r}$. We equip $(\bigoplus_{z\in X_{n_r}}H)$ with the inner product $\langle(h_z),(h'_z)\rangle=|X_{n_r}|^{-1}\sum_{z\in X_{n_r}}\langle h_z,h'_z\rangle$, where $|X_{n_r}|$ is the cardinal of the finite set $X_{n_r}$, then by $(*)$, we have 
      $$\rho_-(d(e,x))\leq||b_r(x)||\leq \rho_+(d(e,x))\eqno(**)$$ 
for any $x\in C_e$.\par
For $x\in X_{n_r}$, define $T_r(x):(\bigoplus_{z\in X_{n_r}}H)\rightarrow (\bigoplus_{z\in X_{n_r}}H)$ by
$$T_r(x)(h)=\left\{
\begin{array}{lcl}
(t_{C_zC_{zx}}(h_{zx}))_{z\in X_{n_r}}, & & {x\in C_e}\\
h,    & & {\text{otherwise}}
\end{array} \right.$$
where $h=(h_z)_{z\in X_{n_r}}$. Then $T_r(x)$ is a uniformly bounded linear operator on $(\bigoplus_{z\in X_{n_r}}H)$, i.e. for any $x\in X_{n_r}$, $L^{-1}\leq ||T_r(x)||\leq L$, where $L$ is the constant coming from Definition \ref{ubfibred}. And we have $T_r(xy)=T_r(x)T_r(y)$ for all $x,y,xy\in C_e$. \par
Moreover, for $x,y,xy\in C_e$, by the relation $t_{C_1C_2}=t_{C_1}(x')t_{C_2}(x')^{-1}$ for all $x'\in C_1\cap C_2$, we have 
      $$b_r(xy)=T_r(x)(b_r(y))+b_r(x).\eqno(***)$$  \par
Then we define $\Tilde{T}_r=T_r\circ\pi_{n_r}$ and $\Tilde{b}_r=b_r\circ\pi_{n_r}$ to be the lift of $T_r$ and $b_r$ to the $r$-ball $\{\gamma\in\Gamma: d(e,\gamma)\leq r\}$ of $\Gamma$, respectively, and define $\Tilde{T}_r=\mathrm{Id}$, $\Tilde{b}_r=0$ outside the ball. Furthermore, for $\gamma \in \Gamma$, define the map $\alpha_r(\gamma):(\bigoplus_{z\in X_{n_r}}H)\rightarrow (\bigoplus_{z\in X_{n_r}}H)$ by 
      $$\alpha_r(\gamma)(h)=\Tilde{T}_r(\gamma)(h)+\Tilde{b}_r(\gamma)$$
for $h\in (\bigoplus_{z\in X_{n_r}}H)$. Then by $(***)$ and the fact of the quotient map $\pi_{n_r}$ is an $r$-isometry, we obtain that $\alpha_r$ is an $r$-local uniformly bounded affine action on the Hilbert space $(\bigoplus_{z\in X_{n_r}}H)$. \par
Therefore, by the right hand side of $(**)$ and Lemma \ref{LocalGlobal}, there exists a uniformly bounded affine action $\alpha$ of $\Gamma$ on the ultraproduct $(\bigoplus_{z\in X_{n_r}}H)_{\mathcal{U}}$ with respect to a non-principal ultrafilter $\mathcal{U}$ on $\mathbb{N}$. Furthermore, by the left hand side of $(**)$ and Lemma \ref{LocalGlobal}, we obtain that the action $\alpha$ is metrically proper. In conclusion, $\Gamma$ is u.b. a-T-menable.   
\end{proof}
\begin{remark}\label{aTmenableAndFibred}
In particular, if we replace ``uniformly bounded" by ``isometric" in the above proof, then we obtain that a finitely generated residually finite group is a-T-menable if and only if one (or equivalently, all) of its box space admits a fibred coarse embedding into Hilbert space. This result has been proven by X. Chen, Q. Wang and X. Wang in \cite{ChenWangWangFibredCoarse}.
\end{remark}

We now consider the above theorem for warped cones introduced by J. Roe in \cite{RoeWarpedCone}\cite{RoeFoliation}.

\begin{definition}
Let $(X,d)$ be a metric space and $\Gamma$ be a finitely generated group with a symmetric generating set $S$, acting on $X$ by homeomorphisms. The \textit{warped metric} $d_{\Gamma}$ is the greatest metric satisfying the following inequalities
$$d_{\Gamma}(x,x')\leq d(x,x'), \:\:\:\:\:\: d_{\Gamma}(x,sx)\leq 1  $$
for any $x,x'\in X$, $s\in S$.
\end{definition}

The warped metric can be characterized by the following lemma (see Proposition 1.6 of \cite{RoeWarpedCone}).

\begin{lemma}
Let $|\cdot|$ be the word length function on $\Gamma$ relative to the generating set $S$, then for any $x,y\in X$, the warped distance from $x$ to $y$ is the infimum of all sums
    $$\sum (d(\gamma_ix_i,x_{i+1})+|\gamma_i|)$$  
taken over all finite sequences $x=x_0,x_1,\cdots,x_N=y$ in $X$ and $\gamma_0,\gamma_1,\cdots,\gamma_{N-1}$ in $\Gamma$. Moreover, assume $X$ to be a proper metric space, if $d(x,y)\leq k$, then one can find a finite sequence as above with $N=k+1$ such that the infimum is attained.
\end{lemma}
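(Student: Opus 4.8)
The plan is to write $\delta(x,y)$ for the infimum appearing in the statement, namely
$$\delta(x,y)=\inf \sum_{i=0}^{N-1}\big(d(\gamma_i x_i,x_{i+1})+|\gamma_i|\big),$$
the infimum being taken over all finite sequences $x=x_0,\dots,x_N=y$ in $X$ and all $\gamma_0,\dots,\gamma_{N-1}\in\Gamma$, and to prove $\delta=d_{\Gamma}$ by showing both that $\delta$ is a metric satisfying the two defining inequalities of $d_{\Gamma}$ and that $\delta$ dominates every metric satisfying them. First I would verify that $\delta$ is a (pseudo)metric obeying both inequalities. Non-negativity is clear, and the triangle inequality follows by concatenating an admissible sequence from $x$ to $y$ with one from $y$ to $z$. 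For symmetry I would reverse a sequence: given a path from $x$ to $y$, one produces a path from $y$ to $x$ of equal cost by inserting the intermediate points $\gamma_i x_i$ and using the identity element together with the inverses $\gamma_i^{-1}$; this is where I use that $S$ is symmetric, so $|\gamma_i^{-1}|=|\gamma_i|$, and that $d$ is symmetric (note that no isometry of the action is needed, only that it is by homeomorphisms). Finally the trivial sequence $x_0=x,\ x_1=x',\ \gamma_0=e$ gives $\delta(x,x')\le d(x,x')$, while $x_0=x,\ x_1=sx,\ \gamma_0=s$ gives $\delta(x,sx)\le|s|=1$.

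Next I would show that every metric $d'$ with $d'(x,x')\le d(x,x')$ and $d'(x,sx)\le 1$ satisfies $d'\le\delta$. The key estimate is $d'(z,\gamma z)\le|\gamma|$ for all $z\in X$ and $\gamma\in\Gamma$: writing a geodesic word $\gamma=s_1\cdots s_m$ with $m=|\gamma|$ and applying the hypothesis $d'(w,sw)\le 1$ successively to $w=s_1\cdots s_{j-1}z$, this follows from the triangle inequality. Then for any admissible sequence, $d'(x_i,x_{i+1})\le d'(x_i,\gamma_i x_i)+d'(\gamma_i x_i,x_{i+1})\le|\gamma_i|+d(\gamma_i x_i,x_{i+1})$, and summing over $i$ and passing to the infimum yields $d'(x,y)\le\delta(x,y)$. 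Applying this to $d'=d_{\Gamma}$ gives $d_{\Gamma}\le\delta$; since the previous paragraph shows $\delta$ satisfies the two inequalities and $d_{\Gamma}$ is by definition the greatest such metric, we also have $\delta\le d_{\Gamma}$. Hence $\delta=d_{\Gamma}$, proving the first assertion.

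For the ``moreover'' part I would first bound the length of a minimizing sequence. Since $\delta=d_{\Gamma}\le d$, we have $\delta(x,y)\le d(x,y)\le k$, so it suffices to consider sequences of total cost at most $k$. Each segment with $\gamma_i\neq e$ contributes at least $1$ to the group cost $\sum_i|\gamma_i|$, so at most $k$ segments can have $\gamma_i\neq e$. After combining maximal runs of consecutive ``pure-metric'' segments (those with $\gamma_i=e$) into single segments by the triangle inequality for $d$, I would then merge each remaining pure-metric segment into the preceding group segment: replacing $x_{i-1}\xrightarrow{\gamma_{i-1}}\gamma_{i-1}x_{i-1}\to x_i\to x_{i+1}$ by $x_{i-1}\xrightarrow{\gamma_{i-1}}\gamma_{i-1}x_{i-1}\to x_{i+1}$ does not increase the cost, again by the triangle inequality. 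This leaves at most one leading pure-metric segment together with at most $k$ group segments, so $N\le k+1$, and one may pad with trivial segments to reach exactly $N=k+1$.

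It remains to attain the infimum with this bounded number of segments, and this is the step I expect to be the main obstacle, since it is where properness of $X$ enters. With $N=k+1$ fixed, the group elements range, among cost-$\le k$ sequences, over the finite set $\{\gamma:|\gamma|\le k\}$, so there are only finitely many choices of the tuple $(\gamma_i)$. For each such choice the quantity $\sum_i d(\gamma_i x_i,x_{i+1})$ is a continuous function of the intermediate points $(x_1,\dots,x_{N-1})$, because $\Gamma$ acts by homeomorphisms. Restricting to near-minimizers, each $d(\gamma_i x_i,x_{i+1})\le k$, so $x_{i+1}$ lies in the closed ball $\overline{B}_d(\gamma_i x_i,k)$; arguing inductively and using that in a proper space bounded sets are relatively compact, one confines each $x_i$ to a compact subset of $X$. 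The cost is then continuous on a product of compact sets and attains its minimum, and minimizing over the finitely many tuples $(\gamma_i)$ produces an honest minimizing sequence of length $N=k+1$. The delicate points are the compactness bookkeeping, namely ensuring the inductively chosen compact sets genuinely capture all near-minimizers, and the interaction between the length reduction and the finiteness of the admissible group data.
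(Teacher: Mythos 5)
Your proof is correct, and since the paper gives no proof of this lemma at all (it simply cites Proposition 1.6 of Roe's warped-cones paper \cite{RoeWarpedCone}), your argument --- identifying $d_{\Gamma}$ with the infimum formula by showing the formula defines a metric satisfying the two constraints and dominates every such metric via the key estimate $d'(z,\gamma z)\le|\gamma|$, then proving attainment from finiteness of the balls $\{\gamma:|\gamma|\le k\}$ together with properness of $X$ --- is essentially the standard (Roe's) argument. The only flaw is notational: in the telescoping estimate the intermediate points should be $s_j s_{j+1}\cdots s_m z$ (generators of $\gamma=s_1\cdots s_m$ applied from the right end inward), not $w=s_1\cdots s_{j-1}z$, since $s_j w=s_j s_1\cdots s_{j-1}z$ is not an intermediate point on the way from $z$ to $\gamma z$ in a nonabelian group; the fix is immediate.
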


Note that warped metrics relative to two different generating sets of $\Gamma$ are not the same, however, they are coarsely equivalent by the above lemma. 

\begin{definition} \label{DefWarpedCones}
Let $(Y,d_Y)$ be a compact space of diameter at most 2, admitting a continuous action of a finitely generated group $\Gamma$. The \textit{cone} of $Y$ is the metric space $\mathcal{O}Y=([1,\infty)\times Y,d)$, where metric $d$ is defined as follows:
      $$d((t,y),(t',y')):=|t-t'|+\min\{t,t'\}\cdot d_Y(y,y')$$
The \textit{warped cone} of $Y$, denoted by $\mathcal{O}_{\Gamma}Y$, is the cone of $Y$ with the warped metric, where the warping group action is defined by the formula: $\gamma(t,y):=(t,\gamma y)$.
\end{definition}

\begin{remark}
Let $Y$ be a compact smooth manifold, embed $Y$ smoothly into a high-dimensional sphere $S^{N-1}$, and let $X$ be the union of all the rays through the origin in $\mathbb{R}^N$ that meet the embedded copy of $Y$. Equip $X$ with the metric induced from $\mathbb{R}^N$, then $X$ is coarsely equivalent to $\mathcal{O}Y$.
\end{remark}


We now introduce the concept of the straightening warped cone defined by D. Sawicki and J. Wu in \cite{SawickiWu}, which is the Galois covering of the warped cone.
\begin{definition}(\cite{SawickiWu})
Let $(X,d)$ be a metric space with a continuous action of a finitely generated group $\Gamma$, with $S$ being a finite set of generators. The \textit{twisted metric} $d^1$ on the Cartesian product $\Gamma\times X$ is the largest metric such that 
    $$d^1((\gamma,x),(s\gamma,x))=1, \:\:\:\:\:\: d^1((\gamma,x),(\gamma,x'))\leq d(\gamma x,\gamma x')$$
for all $s\in S\backslash\{e\}$, $\gamma\in \Gamma$ and $x,x'\in X$.\par
Let $(Y,d_Y)$ be a compact space of diameter at most 2, admitting a continuous action of a finitely generated group $\Gamma$. The \textit{straightening warped cone} of $Y$ is the product space $\Gamma \times \mathcal{O}Y$ with the twisted metric $d^1$, where the group action of $\Gamma$ on $\mathcal{O}Y$ is defined by $\gamma(t,y):=(t,\gamma y)$.
\end{definition} 

Note that the action of $\Gamma$ on $(\Gamma\times X,d^{1})$ is given by $\gamma(\eta,x):=(\eta\gamma^{-1},\gamma x)$, which is a free, proper, isometric action and that the quotient space of this action can be isometrically identified as $X$ via the quotient map $(\gamma,x)\mapsto \gamma x$ (see Proposition 3.7 of \cite{SawickiWu})

The following definition is a more general case of Definition \ref{asym-faithful}.
\begin{definition}
A surjective map $q:Z\rightarrow X$ between metric spaces is said to be \textit{asymptotically faithful} if for every $N\in \mathbb{N}$ there is a subset $A\subset X$ with a bounded complement such that for any $z\in q^{-1}(A)$, the map $q$ restricts to an isometry between the balls $B(z,N)$ and $B(q(z),N)$.
\end{definition}

The following lemma is the main result in \cite{SawickiWu}.
\begin{lemma}\label{AsymFaithWarpedCone}
The quotient map $(\Gamma \times \mathcal{O}Y,d^1)\rightarrow \mathcal{O}_{\Gamma}Y$ given by $(\gamma,t,y)\mapsto (t,\gamma y)$ is asymptotically faithful if and only if the action $\Gamma \curvearrowright Y$ is free.
\end{lemma}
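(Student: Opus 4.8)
The plan is to exploit that $q$ is precisely the quotient map for the free, proper, isometric action $\gamma\cdot(\eta,x)=(\eta\gamma^{-1},\gamma x)$ recorded above, so that asymptotic faithfulness reduces to a displacement estimate for this deck action at points of large radius. Since the action is isometric, the warped metric downstairs is $d_\Gamma(q(z_1),q(z_2))=\inf_{\gamma\in\Gamma}d^1(z_1,\gamma z_2)$, and a standard covering argument gives: if $z=(\eta,t,y)$ satisfies $d^1(z,\gamma z)>4N$ for every $\gamma\neq e$, then $q$ restricts to an isometry of $B(z,N)$ onto $B(q(z),N)$ (the lower bound forces the infimum to be attained at $\gamma=e$, yielding the isometry, while lifting gives surjectivity). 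Because the radial projection $(t,y)\mapsto t$ is a $\Gamma$-invariant pseudometric bounded by $d$ and constant along generator moves, maximality of the warped metric gives $d_\Gamma((t,y),(t',y'))\geq|t-t'|$; hence $\{t<T\}$ is bounded in $\mathcal{O}_\Gamma Y$ for each $T$, and it suffices to produce, for each $N$, a threshold $T$ so that $d^1(z,\gamma z)>4N$ for all $z$ of radius $t\geq T$ and all $\gamma\neq e$.

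For the ``only if'' direction I would argue the contrapositive: if $\Gamma\curvearrowright Y$ is not free, choose $\gamma_0\neq e$ and $y_0$ with $\gamma_0 y_0=y_0$. Then for every $t$ the two distinct lifts $z=(e,t,y_0)$ and $z'=\gamma_0\cdot z=(\gamma_0^{-1},t,y_0)$ satisfy $q(z)=q(z')=(t,y_0)$, while $d^1(z,z')\leq|\gamma_0|$ using group moves only. Taking $N=|\gamma_0|+1$, the map $q$ is never injective on $B(z,N)$; since the base points $(t,y_0)$ leave every bounded set as $t\to\infty$, no $A$ with bounded complement can work, so $q$ is not asymptotically faithful.

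For the converse I would first record the length--space description of $d^1$: a path is a concatenation of group moves $(\gamma,x)\mapsto(s\gamma,x)$ of cost $1$ and fiber moves $(\gamma,t,y)\mapsto(\gamma,t',y')$ of cost $|t-t'|+\min\{t,t'\}\,d_Y(\gamma y,\gamma y')$. Using that the deck action is isometric, translating $z=(\eta,t,y)$ by the deck element $\eta$ gives $d^1(z,\gamma z)=d^1\bigl((e,t,\eta y),\,(\eta\gamma\eta^{-1})\cdot(e,t,\eta y)\bigr)$, and as $\gamma$ ranges over $\Gamma\setminus\{e\}$ so does $\eta\gamma\eta^{-1}$; hence it is enough to bound displacements at base points $(e,t,y)$, uniformly in $y\in Y$. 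At $\eta=e$ the net change of the $\Gamma$-coordinate along any path from $(e,t,y)$ to $\gamma\cdot(e,t,y)=(\gamma^{-1},t,\gamma y)$ equals $\gamma^{-1}$, so every path uses at least $|\gamma|$ group moves and $d^1\bigl((e,t,y),\gamma\cdot(e,t,y)\bigr)\geq|\gamma|$. This disposes of all $\gamma$ with $|\gamma|>4N$.

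It remains to handle the finite set $\{\gamma\neq e:|\gamma|\leq 4N\}$, which is the main obstacle. Suppose a path of cost $\le 4N$ joins $(e,t,y)$ to $(\gamma^{-1},t,\gamma y)$. Each radial coordinate along it stays $\geq t-4N$, so for $t>4N$ the fiber contribution forces $\sum_i d_Y(\gamma_i y_i,\gamma_i y_{i+1})\leq 4N/(t-4N)$, while at most $4N$ group moves occur. Projecting the $q$-image of the path to its $Y$-coordinate yields a loop at $y$ built from at most $4N$ jumps by generators together with arcs of total length $\le 4N/(t-4N)$, whose generators multiply to $\gamma^{-1}$. The delicate point is that $\Gamma$ acts only by homeomorphisms, so transporting ``small angular motion'' back through the generators is not automatic; here I would use that the finitely many group elements of length $\le 4N$ act as a uniformly equicontinuous family of homeomorphisms of the compact space $Y$, producing a modulus $\Omega$ with $\Omega(0^+)=0$ such that $d_Y(\gamma^{-1}y,y)\leq\Omega\!\left(4N/(t-4N)\right)$. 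Freeness and compactness give $\min_{y}d_Y(\gamma^{-1}y,y)>0$ for each such $\gamma$, hence a uniform $\varepsilon_0>0$ over this finite set; choosing $T$ with $\Omega\!\left(4N/(T-4N)\right)<\varepsilon_0$ rules out such short paths for $t\ge T$. Combining the two ranges of $\gamma$ gives $d^1(z,\gamma z)>4N$ for all $\gamma\neq e$ and all $z$ of radius $\ge T$, which by the first paragraph completes the proof.
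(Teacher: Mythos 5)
Your proof is correct, but there is nothing in the paper to compare it against line by line: the paper's entire ``proof'' of this lemma is the sentence ``Refer to the proof of Proposition 3.10 in \cite{SawickiWu}.'' What you have written is, in effect, a self-contained reconstruction of that cited argument, and its skeleton is the standard one: (i) reduce asymptotic faithfulness to a uniform displacement bound $d^1(z,\gamma z)>4N$ for the deck action at points of large radial coordinate, using that $\mathcal{O}_\Gamma Y$ is the metric quotient of $(\Gamma\times\mathcal{O}Y,d^1)$ (the paper itself imports this identification from Proposition 3.7 of \cite{SawickiWu}) and that $d_\Gamma\geq|t-t'|$ makes $\{t<T\}$ bounded; (ii) dispose of $|\gamma|>4N$ by counting group moves; (iii) dispose of the finitely many nontrivial $\gamma$ with $|\gamma|\leq 4N$ by projecting a cheap chain to a loop in $Y$ and playing freeness plus compactness against equicontinuity; (iv) for the converse, use a fixed point $\gamma_0 y_0=y_0$ to produce two lifts of $(t,y_0)$ at $d^1$-distance at most $|\gamma_0|$ for every $t$. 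Each of these steps checks out, including the conjugation trick reducing to base points with group coordinate $e$, which is exactly what makes the group-move count in (ii) legitimate. Two places deserve a word of tightening when written out, both routine: the infimum defining $d^1$ need not be attained, so in (iii) you should work with a chain of cost at most $4N+1$ (or $4N+\varepsilon$) and adjust constants; and the modulus $\Omega$ must be built by iterating the generators' moduli of continuity at most $4N$ times along the path, since the errors compound through the jumps rather than simply adding --- this still gives $\Omega(0^+)=0$ because only boundedly many uniformly continuous homeomorphisms of the compact space $Y$ are involved, which is the substance of your equicontinuity appeal.
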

\begin{proof}
Refer to the proof of Proposition 3.10 in \cite{SawickiWu}.
\end{proof}

Before stating our main theorem for warped cones, we introduce two notions.

\begin{definition}
An action $\Gamma \curvearrowright Y$ is \textit{linearizable} in a Banach space $E$ if there exists an isometric representation of $\Gamma$ on $E$ and a bi-Lipschitz equivariant embedding $Y\rightarrow E$.
\end{definition}

\begin{definition}
The action of the group $G$ on the measure space $(X,\mu)$ is called \textit{essentially free} if $\mu(X_0)=0$, where $X_0$ be the subset of $X$ such that $x\in X_0$ if and only if $gy=y$ for some non-identity element $g \in G$.
\end{definition}

Now we state our main theorem for warped cones, and its proof is similar to the proof of Theorem \ref{MainTheorem}.
\begin{theorem} \label{MainTheorem2}
Let $\Gamma \curvearrowright Y$ be an action of a finitely generated group on a compact metric space.
\begin{enumerate}
\item Assume the action is free and linearizable in a Hilbert space if $\Gamma$ is u.b. a-T-menable, then $\mathcal{O}_{\Gamma}Y$ admits a u.b. fibred coarse embedding into Hilbert space.
\item Assume the action admits an invariant Borel probability measure $\mu$ on $Y$ and is essentially free with respect to $\mu$, if $\mathcal{O}_{\Gamma}Y$ admits a u.b. fibred coarse embedding into Hilbert space, then $\Gamma$ is u.b. a-T-menable.
\end{enumerate}
\end{theorem}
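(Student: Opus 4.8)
The plan is to mirror the proof of Theorem \ref{MainTheorem} closely, with the straightening warped cone $(\Gamma\times\mathcal{O}Y,d^1)$ playing the role that the box space covering $\Gamma\to\Gamma/\Gamma_i$ played there. The key structural facts I would lean on are Lemma \ref{AsymFaithWarpedCone} (freeness $\Leftrightarrow$ asymptotic faithfulness of the quotient map) and the remark after the straightening definition, namely that $\Gamma$ acts freely, properly, and isometrically on $(\Gamma\times\mathcal{O}Y,d^1)$ with quotient $\mathcal{O}_\Gamma Y$. So the warped cone is a Galois covering with structure group $\Gamma$, and I am in the general ``asymptotically faithful covering'' framework of Theorem \ref{EquivariantAndFibred} and Lemma \ref{LocalGlobal}.

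\textbf{Part (1).} Assume $\Gamma$ is u.b.\ a-T-menable and the action is free and linearizable in a Hilbert space. First I would use Lemma \ref{AsymFaithWarpedCone}: freeness gives that the covering $(\Gamma\times\mathcal{O}Y,d^1)\to\mathcal{O}_\Gamma Y$ is asymptotically faithful. Next I need an $L$-controlled uniform equivariant coarse embedding of the covering into Hilbert space, in the sense of Definition \ref{EquivariantCoarse}, so that Theorem \ref{EquivariantAndFibred} applies. Here the affine action comes from u.b.\ a-T-menability: by Lemma \ref{ActionImplyEmbed} the u.b.\ metrically proper affine action $\alpha$ of $\Gamma$ on $H$ yields an equivariant map $f_0:\Gamma\to H$ whose distortion is controlled by $\rho_\pm$. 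The linearizability hypothesis supplies an isometric representation of $\Gamma$ on a Hilbert space $H'$ together with a bi-Lipschitz $\Gamma$-equivariant embedding $\iota:Y\to H'$. I would then assemble an equivariant map on $\Gamma\times\mathcal{O}Y$ by combining $f_0$ on the $\Gamma$-coordinate with $\iota$ (rescaled along the cone radial parameter $t$) on the $Y$-coordinate, i.e.\ something of the form $F(\gamma,t,y)=\big(f_0(\gamma),\,t\cdot\iota(\gamma y)\big)\in H\oplus H'$, with the affine action being the direct sum of $\alpha$ and the given isometric representation. The verification is that the twisted metric $d^1$, which near the diagonal behaves like $|\gamma|$ in the group direction and like the (scaled) metric on $\mathcal{O}Y$ in the space direction, is coarsely compatible with the norm estimates; the uniform bound $L$ on $\|\alpha_\gamma\|$ carries over to the direct-sum action since the second summand is isometric. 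Then Theorem \ref{EquivariantAndFibred} produces the u.b.\ fibred coarse embedding of $\mathcal{O}_\Gamma Y$.

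\textbf{Part (2).} This is the converse and it parallels the ``other side'' of Theorem \ref{MainTheorem} very closely. Assume $\mathcal{O}_\Gamma Y$ admits a u.b.\ fibred coarse embedding with trivializations $t_C$, section $s$, controls $\rho_\pm$, and constant $L$. For each scale $r$, using asymptotic faithfulness I would fix an $r$-isometric region of the covering and, exactly as in the box-space proof, build an $r$-local u.b.\ affine action: the cocycle $c^z_r(x)=t_{C_z}(z)(s(z))-t_{C_z}(zx)(s(zx))$ supplies the $b$-part satisfying the two-sided estimate $\rho_-\le\|\cdot\|\le\rho_+$, while the transition maps $t_{C_1C_2}$ supply the uniformly bounded linear part $T_r$, and condition (2) of Definition \ref{ubfibred} gives the affine cocycle identity $b_r(xy)=T_r(x)b_r(y)+b_r(x)$. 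Lifting along the quotient map and setting the data to be trivial outside the $r$-ball yields an $r$-local u.b.\ affine action $\alpha_r$ on a fixed Hilbert space; Lemma \ref{LocalGlobal} then glues the family $(\alpha_r)_r$ across an ultrafilter into a genuine u.b.\ affine action $\alpha$ of $\Gamma$ on the ultraproduct $H_\mathcal{U}$. The lower bound in $\rho_-$ forces metric properness, so $\Gamma$ is u.b.\ a-T-menable.

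\textbf{The main obstacle} I expect is not Part (2) — that is essentially a transcription of Theorem \ref{MainTheorem} once the warped-cone covering and its asymptotic faithfulness are in place — but rather the \emph{role of the measure} in Part (2) and the \emph{radial scaling} in Part (1). In Part (2) the ultraproduct construction produces an affine action whose properness must be verified for $\Gamma$ itself, and here essential freeness with respect to the invariant measure $\mu$ is what lets me pass from the a.e.-defined cocycles on $Y$ to an honest proper action on a single Hilbert space (one integrates the fibrewise data against $\mu$ and discards the null stabilizer set $X_0$); making this integration compatible with the ultraproduct limit is the delicate point. In Part (1) the genuinely fiddly estimate is choosing the $t$-dependence in $F$ so that the embedding is simultaneously Lipschitz in the cone direction and expanding enough to dominate $\rho_-$ uniformly across all cone levels, since the warped metric mixes the radial coordinate multiplicatively with $d_Y$; I would handle this by the standard trick of controlling the embedding on each annulus $\{t\approx 2^k\}$ and checking the bounds are scale-uniform.
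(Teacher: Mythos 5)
Your overall route is exactly the paper's: both halves are run through the straightening $(\Gamma\times\mathcal{O}Y,d^1)$, using Lemma \ref{AsymFaithWarpedCone} for asymptotic faithfulness, Lemma \ref{ActionImplyEmbed} plus Theorem \ref{EquivariantAndFibred} for part (1), and the $r$-local affine action plus ultraproduct machinery (Lemma \ref{LocalGlobal}) for part (2). Your part (2) is essentially the paper's argument: the paper replaces $Y$ by $Y\setminus Y_0$ (essential freeness) so that Lemma \ref{AsymFaithWarpedCone} applies, fixes a cone level $t_r\times Y$ on which the quotient map is $r$-isometric, defines the cocycle $c^y_r(\gamma)=t_{C_y}(y)(s(y))-t_{C_y}(\gamma^{-1}y)(s(\gamma^{-1}y))$, packages the $y$-indexed data into $b_r(\gamma)\in L^2(Y,\mu;H)$ and $T_r(\gamma)h(y)=t_{C_yC_{\gamma^{-1}y}}(h(\gamma^{-1}y))$ --- this is your ``integrate the fibrewise data against $\mu$'', and the invariance of $\mu$ is precisely what makes $T_r(\gamma)$ uniformly bounded on $L^2(Y,\mu;H)$ --- and then invokes Lemma \ref{LocalGlobal}. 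There is nothing more delicate about the ultraproduct step here than in the box-space case, so your anticipated ``obstacle'' in part (2) is not actually an issue.

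The genuine gap is the explicit equivariant map in part (1). You propose $F(\gamma,t,y)=\bigl(f_0(\gamma),\,t\cdot\iota(\gamma y)\bigr)$, i.e.\ you apply $\iota$ to the \emph{projected} point $\gamma y$. This makes the second component invariant under the deck action, and for exactly that reason it fails to be a coarse embedding with respect to $d^1$: take $p=(\gamma,t,y)$ and $q=(s\gamma,t,y)$ for a generator $s$. Then $d^1(p,q)\leq 1$ by definition of the twisted metric, but
\[
\|F(p)-F(q)\|\ \geq\ t\,\|\iota(\gamma y)-\iota(s\gamma y)\|\ \geq\ c\,t\,d_Y(\gamma y,\,s\gamma y),
\]
which is unbounded in $t$ whenever $s\gamma y\neq \gamma y$; so the upper control by $\rho_+$ fails at large cone levels. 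The warping collapses $\Gamma$-translates to distance $1$, and a deck-invariant map cannot see that collapse --- avoiding this is the entire point of the straightening. The correct map, which is what the paper's target space $H\oplus C\oplus H_0$ encodes, is
\[
F(\gamma,t,y)=\bigl(f_0(\gamma^{-1}),\ t,\ t\,\iota(y)\bigr),
\]
with $\iota$ applied to the \emph{fiber} coordinate $y$ and equivariance carried by the affine action $\alpha\oplus 1\oplus\pi$, where $\pi$ is the isometric representation linearizing the action (one also needs $f_0(\gamma^{-1})$ rather than $f_0(\gamma)$, since the deck action is by right translations while Lemma \ref{ActionImplyEmbed} gives left equivariance). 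With this choice a move $(\gamma,x)\to(s\gamma,x)$ changes only the first coordinate, by at most $\rho_+(1)$, while a move $(\gamma,x)\to(\gamma,x')$ of cost $d(\gamma x,\gamma x')$ changes the last two coordinates by at most a constant multiple of that cost, because $\|t\,\iota(y)-t'\,\iota(y')\|=\|t\,\iota(\gamma y)-t'\,\iota(\gamma y')\|$ by isometry of $\pi$. Finally, the separate radial summand $C$, which your formula drops, is needed for the lower bound: if $\iota(y_0)=0$ for some $y_0$, all points $(\gamma,t,y_0)$, $t\geq 1$, would otherwise have the same image, while their pairwise $d^1$-distances are unbounded.
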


\begin{proof}
For  statement $(1)$. By Lemma \ref{ActionImplyEmbed}, $\Gamma$ admits an $L$-controlled equivariant coarse embedding into some Hilbert space $H$ for some $L>0$. Moreover, there exists a bi-Lipschitz equivariant embedding $Y\rightarrow H_0$ since the action is linearizable. Combining them, we obtain an $L$-controlled equivariant coarse embedding from the straightening warped cone $(\Gamma \times \mathcal{O}Y,d^1)$ to the Hilbert space $H\oplus C \oplus H_0$. Then statement $(1)$ follows from  Theorem \ref{EquivariantAndFibred} and Lemma \ref{AsymFaithWarpedCone} (note that we just state and prove Theorem \ref{EquivariantAndFibred} for the coarse disjoint union, but it is true for the general metric space by a similar proof). \par

For statement $(2)$. Let $Y_0$ be the subset of $Y$ such that $y\in Y_0$ if and only if $\gamma y=y$ for some non-identity element $\gamma \in \Gamma$, then $\mu(Y_0)=0$ and the group action of $\Gamma$ on $Y\backslash Y_0$ is free. Thus we can assume the action on $Y$ is free. Otherwise, we replace $Y$ by $Y\backslash Y_0$. By Definition \ref{ubfibred} and Lemma \ref{AsymFaithWarpedCone}, for any integer $r>0$, there exists a positive integer $t_r>1$ such that (\romannumeral1) for each subset $C\subset t_r\times Y\subset \mathcal{O}_{\Gamma}Y$ of diameter less than $r$, there exists a trivialization $t_C$ satisfying the conditions of Definition \ref{ubfibred}; (\romannumeral2) the quotient map $q_{t_r}:(\Gamma\times t_r\times Y,d^1)\rightarrow (t_r\times Y,d_{\Gamma})\subset \mathcal{O}_{\Gamma}Y$ given by $(\gamma,t_r,y)\mapsto (t_r,\gamma y)$ is an $r$-isometric map (here we need freeness of the action from Lemma \ref{AsymFaithWarpedCone}). \par

Now fixing $t_r$, we identify the metric space $(t_r\times Y,d_{\Gamma})$ with the space $Y$ equipped with the warped metric $d_{\Gamma}$ of $t_r\times Y$. For $y\in Y$, let $C_y:=\{y'\in Y: d_{\Gamma}(y,y')\leq r\}$, for each $\gamma\in \Gamma$, define a vector $c^{y}_r(\gamma)$ in the Hilbert space $H$ by
      $$ c^y_r(\gamma)=\left\{
\begin{array}{lcl}
t_{C_y}(y)(s(y))-t_{C_y}(\gamma^{-1} y)(s(\gamma^{-1} y)), & & {d(e,\gamma)\leq r}\\
0,        & & {\text{otherwise}}
\end{array} \right. $$ 
where $e$ is the identity element of $\Gamma$. When $d(e,\gamma)\leq r$, we have $d_{\Gamma}(y,\gamma^{-1} y)=d^1((e,y),(\gamma^{-1},y))=d(e,\gamma^{-1})\leq r$ by (\romannumeral2) above, this means $\gamma^{-1} y\in C_y$, thus the map $c^{y}_r: \Gamma \rightarrow H$ is well-defined. Let $\rho_{\pm}$ be the control functions coming from Definition \ref{ubfibred}, then we have 
      $$\rho_{-}(d(e,\gamma))\leq ||c^{y}_r(\gamma)||\leq \rho_+(d(e,\gamma))  \eqno(*)$$
for any $\gamma$ with word length less than $r$. \par
Define the map $b_r: \Gamma\rightarrow L^2(Y,\mu;H)$ by
      $$b_r(\gamma): y\mapsto c^{y}_r(\gamma)$$
for $\gamma \in \Gamma$. We equip $L^2(Y,\mu;H)$ with the inner product $\langle h,h'\rangle=\int_{Y}\langle h(y),h'(y)\rangle\, d(\mu)$, then by $(*)$, we have 
      $$\rho_-(d(e,\gamma))\leq||b_r(\gamma)||\leq \rho_+(d(e,\gamma))\eqno(**)$$ 
for any $\gamma$ with word length less than $r$. \par
For $\gamma \in \Gamma$, define $T_r(\gamma): L^2(Y,\mu;H) \rightarrow L^2(Y,\mu;H)$ by
$$T_r(\gamma)(h): y\mapsto \left\{
\begin{array}{lcl}
t_{C_yC_{\gamma^{-1} y}}(h(\gamma^{-1} y)), & & {d(e,\gamma)\leq r}\\
h(y),    & & \text{otherwise}
\end{array} \right.$$
where $h\in L^2(Y,\mu;H)$. Then for any $\gamma \in \Gamma$, $L^{-1}\leq ||T_r(\gamma)||\leq L$, where $L$ is the constant coming from Definition \ref{ubfibred}. And we have $T_r(\gamma_1\gamma_2)=T_r(\gamma_1)T_r(\gamma_2)$ for all $\gamma_1$,$\gamma_2$,$\gamma_1\gamma_2$ with word length less than $r$. \par
For $\gamma\in \Gamma$, define 
      $$\alpha_r(\gamma)(h):=T_r(\gamma)(h)+b_r(\gamma)$$
where $h\in L^2(Y,\mu;H)$, then $\alpha_r$ is an $r$-local uniformly bounded affine action on the Hilbert space $L^2(Y,\mu;H)$. \par
Therefore, by the right side of $(**)$ and Lemma \ref{LocalGlobal}, there exists a uniformly bounded affine action $\alpha$ of $\Gamma$ on the ultraproduct $(L^2(Y,\mu;H))_{\mathcal{U}}$ with respect to a non-principal ultrafilter $\mathcal{U}$ on $\mathbb{N}$. Furthermore, by the left side of $(**)$ and Lemma \ref{LocalGlobal}, we obtain that the action $\alpha$ is metrically proper. In conclusion, $\Gamma$ is u.b. a-T-menable.  
\end{proof}

\begin{remark}
The above theorem for the ``isometric'' case (i.e. $L=1$) has been proven by D. Sawicki, J. Wu in \cite{SawickiWu} and Q. Wang, Z. Wang in \cite{2WangWarpedCone}. 
\end{remark}

\begin{remark}
In fact, our two main theorems (i.e. Theorem \ref{MainTheorem}, \ref{MainTheorem2}) hold for the more general Banach space which is closed under ultra-products, such as $L^p$-spaces for $1\leq p <\infty$ and uniformly convex Banach spaces. 
\end{remark}

\section{Examples}\label{Examples}
\begin{example}
The following are some known results.
\begin{enumerate}
\item Obviously, a metric space that can be coarsely embedded into Hilbert space admits a (u.b.) fibred coarse embedding into Hilbert space, such as hyperbolic groups and linear groups as metric spaces (see Chapter 5 of \cite{NowakYu}).
\item The free group $\mathbb{F}_2$ on two generators and the special linear group $SL(2,\mathbb{Z})$ are a-T-menable (see \cite{HaagerupProperty}), then by Theorem \ref{MainTheorem}, all box spaces $\square \mathbb{F}_2$ and $\square SL(2,\mathbb{Z})$ admit a (u.b.) fibred coarse embedding into Hilbert space. But there exist such box spaces which cannot be coarsely embedded into Hilbert space since they could be expander graphs (see Chapter 5 of \cite{NowakYu}, Chapter 13 of \cite{WillettYuBook}). 
\item A space of graphs with large girth admits a (u.b.) fibred coarse embedding into Hilbert space, such as a sequence of Ramanujan graphs (see \cite{ChenWangYufibredcoarse}\cite{WillettYuExpander}).
\end{enumerate}
\end{example}

The following examples illustrate the differences between u.b. fibred coarse embeddability and fibred coarse embeddability.
\begin{example} \label{Example2}
Let $\mathbb{H}$ be the field of quaternions. Define a sesquilinear form $q$ on $\mathbb{H}^{n+1}$ by
      $$q(v,w)=-\overline{v}_0w_0+\sum_{i=1}^n \overline{v}_iw_i$$
where $v=(v_0,\cdots,v_n)^T, w=(w_0,\cdots,w_n)^T$ are two elements in $\mathbb{H}^{n+1}$. For $n\geq 1$, define $Sp(n,1)$ be a collection of $(n+1)\times(n+1)$ matrices over $\mathbb{H}$ which acts on $\mathbb{H}^{n+1}$ from left and preserves the sesquilinear form $q$, i.e. for each $A\in Sp(n,1)$, we have  
      $$q(Av,Aw)=q(v,w)$$
for any $v,w\in \mathbb{H}^{n+1}$. \par
Every cocompact lattice $\Gamma$ of $Sp(n,1)$ is finitely generated residually finite group by Mal\'{e}v's theorem, thus Theorem \ref{MainTheorem} and  Theorem \ref{NishikawaTheorem} tell us that each of its box spaces $\square \Gamma$ admits a u.b. fibred coarse embedding into Hilbert space. On the other hand, $Sp(n,1)$ has property(T) for any $n\geq 2$, so does $\Gamma$ (see \cite{PropertyT}). It implies that $\Gamma$ is not a-T-menable. Thus all box spaces $\square \Gamma$ do not admit a fibred coarse embedding into Hilbert space. \par
Furthermore, in \cite{ChenWangYufibredcoarse}, X. Chen, Q. Wang and G. Yu proved that the maximal coarse Baum-Connes conjecture holds for metric spaces  admitting a fibred coarse embedding into Hilbert space. However, for space admitting a u.b. fibred coarse embedding into Hilbert space, this conjecture maybe not true. For example, consider the above box space $\square \Gamma$. Because $\Gamma$ has property(T) for $n\geq 2$, thus $\square \Gamma$ has geometric property(T), then by a theorem proved by R. Willett and G. Yu (see Theorem 7.4 in \cite{WillettYuExpander2}), the maximal coarse Baum-Connes conjecture does not hold for $\square \Gamma$.  
\end{example}

\begin{example}
Let $\Gamma$ be a cocompact lattice in $Sp(n,1)$ as above, then there exists a sequence of finite index normal subgroups $\Gamma=\Gamma_0\unrhd \Gamma_1\unrhd \cdots \unrhd \Gamma_m \unrhd \cdots$ such that $\bigcap_{i\in\mathbb{N}}\Gamma_i=\{e\}$. Let $q: \Gamma/\Gamma_{m+1} \rightarrow \Gamma/\Gamma_m$ be the quotient map, then we have an inverse system $\{\Gamma/\Gamma_m,q\}$ and its inverse limit is 
      $$\underleftarrow{\lim}(\Gamma/\Gamma_m) = \{(g_m)\in \prod(\Gamma/\Gamma_m): q(g_{m+1})=g_m\}.$$
Define a metric $d$ on $\underleftarrow{\lim}(\Gamma/\Gamma_m)$ by
      $$d((g_m),(g'_m)):=1/2^j$$
where $j$ is the smallest index such that $g_j\neq g'_j$. Consequently, $(\underleftarrow{\lim}(\Gamma/\Gamma_m),d)$ is a compact metric space and the action $\Gamma \curvearrowright \underleftarrow{\lim}(\Gamma/\Gamma_m)$ given by $\gamma\cdot (g_m)=(\gamma g_m)$ is free. Now we consider the Hilbert space $\ell^2(\bigsqcup(\Gamma/\Gamma_m))$ with the isometric $\Gamma$-action given by $\gamma\cdot\delta_{g}=\delta_{\gamma g}$, where $g\in \bigsqcup(\Gamma/\Gamma_m)$. Define a map $f: \underleftarrow{\lim}(\Gamma/\Gamma_m)\rightarrow \ell^2(\bigsqcup(\Gamma/\Gamma_m))$ by $f((g_m))=\sum_m \delta_{g_m}/2^m$, then $f$ is a $\Gamma$-equivariant map and we have $||f((g_m))-f((g'_m))||=(2\sqrt{6}/3)\cdot d((g_m),(g'_m))$. Thus the action $\Gamma \curvearrowright \underleftarrow{\lim}(\Gamma/\Gamma_m)$ is linearizable in the Hilbert space $\ell^2(\bigsqcup(\Gamma/\Gamma_m))$.\par
Finally, by Theorem \ref{MainTheorem2} and Theorem \ref{NishikawaTheorem}, the warped cone $\mathcal{O}_{\Gamma}(\underleftarrow{\lim}(\Gamma/\Gamma_m))$ admits a u.b. fibred coarse embedding into Hilbert space, but it does not admit a fibred coarse embedding into Hilbert space since $\Gamma$ has property(T) when $n\geq 2$.
\end{example}

\bibliographystyle{plain}
\bibliography{Ubfce}

\end{document}